\pgfplotsset{compat=newest}       
\newcounter{tikzsubfigcounter}[figure]
\renewcommand{\thetikzsubfigcounter}{\the\numexpr\value{figure}+1\relax\alph{tikzsubfigcounter}}
\newcounter{tikzsubfigcounterinvisible}[figure]
\renewcommand{\thetikzsubfigcounterinvisible}{\the\numexpr\value{figure}+1\relax\alph{tikzsubfigcounterinvisible}}
\newcommand{\settikzlabel}[1]{ %
	\refstepcounter{tikzsubfigcounterinvisible} \label{#1} 
}
\tikzstyle arrowstyle=[scale=1]
\tikzstyle directed=[postaction={decorate,decoration={markings,
		mark=at position .65 with {\arrow[arrowstyle]{stealth}}}}]
\tikzstyle reverse directed=[postaction={decorate,decoration={markings,
		mark=at position .65 with {\arrowreversed[arrowstyle]{stealth};}}}]
\begin{document}

\title{A posteriori error analysis and adaptive non-intrusive numerical schemes for systems of random conservation laws}
\author{Jan Giesselmann\thanks{Department of Mathematics, TU Darmstadt, Dolivostra\ss e 15,  64293 Darmstadt, Germany. 
} 
\and Fabian Meyer\thanks{Institute of Applied Analysis and Numerical Simulation, University of Stuttgart, 
Pfaffenwaldring 57, 70569 Stuttgart, Germany. 
\newline F.M., C.R. thank the Baden-W{\"u}rttemberg Stiftung for support via the project 'SEAL'. J.G. thanks the German Research Foundation (DFG) for support of the project via DFG grant GI1131/1-1. $^*$fabian.meyer@mathematik.uni-stuttgart.de}~$^{,*}$
\and Christian Rohde\samethanks }
\date{\today}
\providecommand{\keywords}[1]{{\textit{Key words:}} #1\\ \\}
\providecommand{\class}[1]{{\textit{AMS subject classifications:}} #1}
\maketitle

\begin{abstract}  \noindent
In this article we consider  one-dimensional random systems of
hyperbolic conservation laws. We first establish existence and uniqueness of random 
entropy admissible solutions for initial value problems of conservation laws which
involve random initial data and  random flux functions.
Based on these results we present an a posteriori error analysis for a numerical approximation 
of the random entropy admissible solution. 
For the stochastic discretization, we consider a non-intrusive approach, the 
Stochastic Collocation method. The spatio-temporal discretization 
relies on the Runge--Kutta Discontinuous Galerkin method.
We derive the a posteriori estimator
using continuous reconstructions of the discrete solution. Combined with the relative entropy stability framework
this yields computable error bounds for the entire space-stochastic discretization error.
The estimator admits a splitting into a stochastic and a deterministic (space-time) part, allowing
for a novel residual-based space-stochastic adaptive mesh refinement algorithm.
We conclude with various numerical examples investigating the scaling properties of
the residuals and
illustrating the efficiency of the proposed adaptive algorithm.

\end{abstract}
\keywords{hyperbolic conservation laws, uncertainty quantification,
 a posteriori error estimates, stochastic collocation
method, discontinuous Galerkin method, adaptive mesh refinement}
\class{Primary 35L65, 35R60; Secondary 65M15, 65M60, 65M70}

\section{Introduction}
Quantifying the influence of random model parameters as well as uncertain initial or boundary conditions
has become an important task in computational science and engineering. Uncertainty Quantification (UQ) addresses 
this issue and provides a variety of mathematical methods to examine the influence of uncertain input parameters on 
numerical solutions and derived quantities of interest.

In this article we study (spatially) one-dimensional systems of random hyperbolic conservation laws, where the uncertainty
stems from random initial data or from  random flux functions. 
Based on Kru\v{z}kov's work \cite{kruvzkov1970}, a firm theory for random scalar conservation laws 
in several space dimensions has been developed \cite{TokarevaMishra2016, RisebroSchwab2016, MishraSchwab2012}.
Compared to scalar equations, little is known about existence and uniqueness 
of entropy solutions for systems of hyperbolic conservation laws, 
especially in multiple space dimensions. For one-dimensional systems with initial data with small total variation,
Glimm provided a proof for the global existence of entropy admissible solutions  \cite{Glimm1975}.
Later,  Bressan and coauthors \cite{BressanLeFloch1997,BressanLewicka2000,Bressan2001} proved that the entropy admissible solutions constructed by the Glimm scheme (or equivalently by wave-front tracking) are unique in the 
sense that they are the only entropy admissible solutions 
satisfying additional stability properties such as
certain bounds on the growth of their total variation. 
Based on these deterministic results, we prove the existence and uniqueness of so-called random entropy admissible solutions for one-dimensional systems of
random hyperbolic conservation laws (\thmref{thm:randomEntropySol}).

Having established existence and uniqueness we
approximate the random entropy admissible solution numerically.
We discretize the random space by the Stochastic Collocation (SC) method \cite{abgrallmishra2017, HesthavenXiu2005,BabuskaNobileTempone2010}. 
The method is non-intrusive, which means that the underlying deterministic solver does not need 
to be modified. Moreover, it is easily parallelizable and it avoids the problem 
of losing hyperbolicity for nonlinear hyperbolic systems,
 a major drawback of many intrusive methods, most notably the Stochastic Galerkin method
\cite{PoetteDespresLucor2009}. 
As specific deterministic solver we use the Runge--Kutta Discontinuous Galerkin method \cite{CockburnShu2001}.

For nonlinear random hyperbolic conservation laws, discontinuities, both in physical
and stochastic space, may appear in finite time.
It is therefore favorable to locally increase the spatial and stochastic mesh resolution around 
the discontinuities in physical and stochastic space.
Adaptive algorithms for the Stochastic Collocation method and for the
Simplex Stochastic Collocation method
have been considered 
in \cite{GunzburgerWebsterZhang2014,WittevennIaccarino2013}. 
The refinement criteria for these methods
are based on heuristic considerations and are not immediately linked to the true numerical error.
Moreover, they do not consider  refinement in physical space.
An approach which combines both, physical and stochastic refinement, has 
been introduced in \cite{BryantPrudhomme2015}, where 
the authors consider random boundary value problems
for second order partial differential equations and use adjoint methods to derive separable error bounds
for linear quantities of interest. They then use the corresponding residuals for local mesh refinement. 
For random elliptic problems the analysis of adaptive mesh refinements based on 
a posteriori error estimates 
is more advanced
than for random hyperbolic conservation laws, cf. \cite{GuignardNobile2018,SchiecheLang2014} 
and references therein.

In this work, we present as the first new contribution an a posteriori error analysis 
which is based on the following approach \cite{Makridakis2006}:
We view the numerical solution (or, more precisely, a reconstruction thereof) as the exact solution of a perturbed version of the original problem.~The perturbation is given by a computable residual which acts as a source term.~By using the appropriate stability theory for the problem at hand we can bound 
the difference between the exact and the numerical solution
in terms of the residual. The suitable stability theory for systems of hyperbolic conservation laws is the relative entropy stability framework of Dafermos and DiPerna, see \cite[Theorem 5.2.1 ]{Dafermos2016}.

The specific reconstructions, which we use, are based on reconstructions for deterministic problems suggested in \cite{GiesselmannDedner16}, 
see also \cite{GiesselmannMeyerRohde18} for a modified reconstruction 
in terms of Stochastic Galerkin schemes.
Using these reconstructions, we obtain a residual admitting a decomposition into a spatial and a stochastic part,
which enables us to control the errors arising from spatial and stochastic discretization.
Based on the a posteriori error estimate we exploit the residuals' structure and propose
as the second novel contribution of this paper a residual-based space-stochastic adaptive numerical scheme.
While for smooth solutions the estimator provides a reliable a posteriori error control, for discontinuous solutions it blows up under mesh refinement.  
However, the residuals precisely capture the positions of rarefaction waves, contact discontinuities and shocks. Therefore,
our residual-based space-stochastic adaptive numerical scheme
leads to a significant error reduction
compared to uniform mesh refinement.
Due to its non-intrusive structure our proposed method admits a straightforward parallelization, 
the residuals are 
on-the-fly computable and the resulting adaptive schemes efficiently decrease the numerical error
compared to uniform mesh refinement.

This article is structured as follows: In \secref{sec:prelim} we describe our equation of interest. 
In \secref{sec:rcls} we first review the deterministic well-posedness theory for one-dimensional systems of hyperbolic conservation laws.
We then introduce the notion of random entropy admissible solutions and establish existence and uniqueness under suitable
assumptions on the random initial data and random flux function. \secref{sec:discretization} 
describes the
stochastic discretization and we show how to obtain the reconstruction from our numerical solution.
In \secref{sec:aposteriori} we establish the a posteriori estimate and derive the splitting of the error estimator
into a spatio-temporal and a stochastic part. Furthermore, we describe our space-stochastic adaptive numerical schemes.
Finally, in \secref{sec:numerics}, we provide various numerical examples for the Euler equations, where on the one hand we examine the scaling behavior 
of the corresponding residuals and on the other hand we compare the error reduction of our adaptive numerical algorithms 
to that of uniform mesh refinements and show that our adaptive schemes are indeed more efficient.

\section{Preliminaries and Notation} \label{sec:prelim}
\subsection{A Primer on Probability Theory}
Let $(\Omega, \F,\P)$ be a probability space, where $\Omega$ is the set of all elementary events $\omega \in \Omega$, $\F$ is a $\sigma$-algebra on $\Omega$ and $\P$ is a probability measure. 
In the following we  parametrize the uncertainty with a random vector
 $\xi : \Omega\to  \Xi \subset \R^N$ with independent, absolutely continuous components, i.e. $\xi(\omega) = \big(\xi_1(\omega),\ldots, \xi_N(\omega)\big) : \Omega\to \Xi \subset \R^N $.
This means that for every random variable $\xi_i$ there exists a density function 
$\densityfunctionOneD{i} :\R \to [0,\infty)$, such that $\int \limits_{\R} \densityfunctionOneD{i}(y)~ \mathrm{d}y=1$ and  $\P[ \xi_i \in A]=\int \limits_A \densityfunctionOneD{i}(y)~ \mathrm{d}y$, 
for any $A \in \mathcal{B}(\R)$, for all $i=1,\ldots, N$. Here $\mathcal{B}(\R)$ is the
Borel $\sigma$-algebra on $\R$.
Moreover, the joint density function $w$ of the random vector
 $\xi = (\xi_1,\ldots,\xi_N)$ can be written as
$$ \densityfunction(y) = \prod \limits_{i=1}^N \densityfunctionOneD{i}(y_i)\quad \forall ~ y= (y_1,\ldots, y_N)^\top \in \Xi. $$
The random vector induces a probability measure $\tilde{\P}(B):= \P( \xi^{-1}(B))$ for all $B \in \mathcal{B}(\Xi)$ on the measurable space $(\Xi, \mathcal{B}(\Xi))$.
This measure is called the law of $\xi$ and in the following we work on the 
image probability space $(\Xi, \mathcal{B}(\Xi), \tilde{\P})$.

For a Banach space $E$ and its Borel $\sigma$-algebra $\mathcal{B}(E)$, we consider the weighted $L_{\densityfunction}^p$-spaces equipped with the norms
\begin{align*}
\|f\|_{L_w^p(\Xi;E)} :=
\begin{cases}
\Big(\int \limits_\Xi\|f(y)\|_E^p ~\densityfunction(y)\mathrm{d}y\Big)^{1/p} = \E\Big(\|f\|_E^p\Big)^{1/p},\quad& 1\leq p<\infty \\
\operatorname{ess~sup}\limits_{y \in \Xi} \|f(y)\|_E,  &p= \infty .
\end{cases} 
\end{align*}

\subsection{Statement of the Problem}
We start with the following one-dimensional hyperbolic system of $m \in \N$ (deterministic) conservation laws.
\begin{equation} \label{eq:IVP}   \tag{IVP}
\begin{aligned} 
\begin{cases}
\partial_t \stochSol(t,x) + \partial_x  \Flux(\stochSol(t,x))= 0 , &(t,x)  \in   (0,T) \times \R,
\\ \stochSol(0,x)= \stochSol^0(x),  & x \in  \R .\notag
\end{cases}
\end{aligned} 
\end{equation} 
Here, $\stochSol(t,x) \in \U \subset \R^m$ is the vector of conserved quantities, 
$\Flux \in C^2(\U;\R^m)$ is the flux function, and $\mathcal{U}\subset \R^m$ is the state space, which is assumed to be an open set and  $T \in (0,\infty)$. We make the following assumptions on
the initial condition and flux function. 
\begin{enumerate}[label=(D\arabic*),ref=D\arabic*]
  \item The initial condition satisfies $\stochSol^0\in {\Leb^1(\R;\U)}.$ 
        \label{ass:initialdata}
  \item  The Jacobian $\D\Flux$ has $m$ distinct real eigenvalues, with each characteristic field being
   either genuinely nonlinear or linearly degenerate \cite[Def. 3.1.1, Def. 7.5.1]{Dafermos2016}.
    \label{ass:flux}
\end{enumerate}

For our probabilistic equation of interest we admit in \eqref{eq:IVP} random variations 
in the flux and initial datum, 
leading to the following one-dimensional system of $m \in \N$ random conservation laws.
\begin{equation} \label{eq:RIVP}   \tag{RIVP}
\begin{cases}
\partial_t \stochSol(t,x,y) + \partial_x  \Flux(\stochSol(t,x,y),y)= 0, &(t,x,y)  \in   (0,T) \times \R \times \Xi,
\\ \stochSol(0,x,y)= \stochSol^0(x,y),  &(x,y) \in  \R \times  \Xi.\notag
\end{cases}
\end{equation} 
Here, $\stochSol(t,x,y) \in \U \subset \R^m$ is the random vector of conserved quantities and
$\Flux(\cdot,y)\in C^2(\U;\R^m)$, $\Pas$
is the random flux function. 

For the sake of simplicity we keep the same notation for the
solution and for the flux as in \eqref{eq:IVP} and make the following assumptions on the random initial condition and on the random flux function. 
Note that these assumption are the probabilistic versions of assumptions \eqref{ass:initialdata} and
\eqref{ass:flux}.
\begin{enumerate}[label=(R\arabic*),ref=R\arabic*]
  \item The uncertain initial condition satisfies $\stochSol^0\in \leb{1}{\Xi}{\Leb^1(\R;\U)}.$ 
        \label{ass:randominitialdata}
  \item For almost every realization $y\in \Xi$, the Jacobian $\D \Flux(\cdot,y)$ has $m$ distinct real
   eigenvalues, 
   and each characteristic field is 
    either linearly degenerate 
  or genuinely nonlinear. 
  Moreover, we assume that $\Flux \in \leb{2}{\Xi}{C^2(\U;\R^m)}$.  \label{ass:randomflux}
\end{enumerate}

\section{Well-Posedness: Deterministic vs. Random Hyperbolic Conservation Laws} \label{sec:rcls}
In this section we first review some classical results for deterministic one-dimensional hyperbolic conservation laws.
We then introduce the notion of a random entropy solution for \eqref{eq:RIVP} and establish its existence and uniqueness based on the results for the deterministic hyperbolic conservation law
\eqref{eq:IVP}. 
\subsection{Deterministic Hyperbolic Conservation Laws} \label{subsec:detCL}
Let us consider the deterministic initial value problem \eqref{eq:IVP}.
We say that a strictly convex function $\eta \in C^2(\U;\R)$ and a function $q \in C^2(\U;\R)$
form an entropy/entropy-flux pair, if they satisfy $\D \eta \D \Flux =\D q$.
We assume that the system \eqref{eq:IVP} is endowed with at least one entropy/entropy-flux pair. 
We then define entropy admissible solutions in the following way.

\begin{definition}[Entropy admissible solution]
A function $\stochSol \in L^1((0,T)\times \R;\U)$ is called an entropy admissible solution of \eqref{eq:IVP}, if it satisfies the following conditions:
\begin{enumerate}
\item It is a weak solution, i.e.
\begin{align} \label{eq:entropySol}
\int \limits_0^T \int \limits_{\R} \Big( \stochSol(t,x) \cdot \partial_t \phi(t,x) & + \Flux(\stochSol(t,x)) \cdot  \partial_x\phi(t,x)\Big)~\mathrm{d}x \mathrm{d}t  \notag
\\ &=  - \int \limits_{\R} \stochSol^0(x) \cdot \phi(0,x)~ \mathrm{d}x,
\end{align}
for all $\phi\in C_c^\infty([0,T)\times \R;\R^m)$.
\item It satisfies the weak entropy inequality:
\begin{align} \label{ineq:entropySol}
\int \limits_0^T \int \limits_{\R} \Big( \eta(\stochSol(t,x)) \partial_t \Phi(t,x)  & +q(\stochSol(t,x))  \partial_x\Phi(t,x)\Big)~\mathrm{d}x \mathrm{d}t   \notag
\\ & \geq- \int \limits_{\R} \eta(\stochSol^0(x)) \Phi(0,x)~ \mathrm{d}x, 
\end{align}
for all $\Phi\in C_c^\infty([0,T)\times \R;\R_+)$.
\end{enumerate}
\end{definition}

\begin{theorem}[\hspace{-1pt}\cite{BressanLeFloch1997}, Theorem 2] \label{thm:entropySol} 
Provided \eqref{ass:flux} holds, there exists  a non-empty closed domain $\cD \subset \lebnonweight{1}{\R}{\U}$ of integrable functions with small total
variation and a semi-group $\cS(t): [0,\infty) \times \cD \to \cD$, called Standard Riemann Semigroup (SRS), that is unique (up to its domain) and
which has in particular the following properties:
\begin{enumerate}[label=(\roman*)]
\item There exists a constant $L>0$, such that 
\begin{align*}
\|\cS(s)\overline{u}- &\cS(t)\overline{v}\|_{\lebnonweight{1}{\R}{\R^m}}
  \leq L \Big(|s-t|+ \|\overline{u}- \overline{v}\|_{\lebnonweight{1}{\R}{\R^m}}\Big),
\end{align*}
for all $\overline{u},\overline{v} \in \cD$ and for all $s,t\geq 0$. 
\item For $\overline{u}\in \cD$ 
the function $u(t,x):= (\cS(t)\overline{u})(x)$ 
is an entropy admissible solution of \eqref{eq:IVP}. It is the unique entropy admissible solution that is obtained as $L^1$-limit of the wave-front tracking algorithm.
\end{enumerate}
\end{theorem}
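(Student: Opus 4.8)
The plan is to prove this result by the method of wave-front tracking, which underlies the cited references. Since the statement asserts existence of the semigroup, its uniqueness up to domain, and the two quantitative properties, I would proceed in four stages. First, I would construct approximate solutions: for piecewise constant data $\overline{u}$ of sufficiently small total variation I solve each local Riemann problem approximately, replacing rarefaction fans by small rarefaction fronts and tracking shock and contact fronts exactly, resolving the new Riemann problem at each collision. The central analytic tool is a Glimm-type functional
\begin{equation*}
\mathcal{G}(t) = V(t) + C_0\, Q(t),
\end{equation*}
where $V(t)$ is the total strength of all fronts and $Q(t)$ the interaction potential summing products of strengths of approaching pairs. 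By \eqref{ass:flux} each binary interaction changes wave strengths only quadratically, so for $C_0$ large $\mathcal{G}$ is nonincreasing. This gives a uniform bound on the total variation of the approximations and, via finite propagation speed, a uniform $\lebnonweight{1}{\R}{\R^m}$ modulus of continuity in $t$.

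With the uniform total-variation bound in hand, Helly's compactness theorem furnishes a subsequence of front-tracking approximations converging in $L^1_{\mathrm{loc}}$ to a limit $u$. A standard consistency argument shows $u$ satisfies the weak formulation \eqref{eq:entropySol} and the entropy inequality \eqref{ineq:entropySol}, so $u$ is an entropy admissible solution; setting $(\cS(t)\overline{u})(x):=u(t,x)$ and extending to all of $\cD$ by density defines the candidate semigroup.

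The main obstacle is the Lipschitz dependence on the data, i.e. the $\|\overline{u}-\overline{v}\|$ part of property (i), since the nonlinearity precludes any naive Gr\"onwall estimate. Here I would construct a functional $\Psi(u,v)$ uniformly equivalent to $\|u-v\|_{\lebnonweight{1}{\R}{\R^m}}$ and \emph{almost nonincreasing} along pairs of front-tracking solutions. Concretely, one measures the distance by a weighted sum over the jumps of the scalar shift functions obtained from the Riemann problem between $u(t,\cdot)$ and $v(t,\cdot)$, with Liu--Yang weights chosen so that the positive contributions of wave interactions are dominated by the decrease of the potential $Q$. Differentiating $\Psi$ along the two flows and bounding every interaction term by $\mathcal{G}$ yields $\tfrac{d}{dt}\Psi \le C\,\Psi$ up to errors vanishing with the front-tracking parameter, which in the limit gives the $L^1$ stability estimate; combined with the temporal modulus of the first stage this produces the full bound (i).

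Finally, for the semigroup property and uniqueness I would verify that $\cS$ satisfies the two defining tangency conditions of a Standard Riemann Semigroup: that it reproduces the self-similar solution of each Riemann problem to first order in $t$, and that it has finite propagation speed. Comparing any two semigroups satisfying these local conditions through an integral-in-time telescoping of their difference, bounded using the already-established Lipschitz property, shows they agree on the intersection of their domains, giving uniqueness up to $\cD$. Property (ii), that the constructed $u$ is the unique entropy solution arising as the $L^1$-limit of wave-front tracking, is then immediate from the construction.
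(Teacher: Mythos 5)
This theorem is not proved in the paper at all: it is imported verbatim as a citation to \cite{BressanLeFloch1997}, so there is no internal proof to compare against. Your sketch reconstructs the standard argument from the cited literature (front tracking with a Glimm functional, Helly compactness, a Liu--Yang-type weighted functional for $L^1$ stability, and the tangency characterization of the Standard Riemann Semigroup for uniqueness), and in outline this is the correct and essentially canonical route. One historical remark: Bressan--LeFloch (1997) itself \emph{assumes} the existence of the SRS and establishes the uniqueness characterization; the existence and Lipschitz continuity are due to Bressan--Colombo (for $2\times2$ systems), Bressan--Crasta--Piccoli, and the simplified functional proof of Bressan--Liu--Yang, which is the one you sketch.

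One step as written would not deliver the stated conclusion and should be repaired. Property (i) asserts a single constant $L$ valid for \emph{all} $s,t\geq 0$, i.e.\ a time-uniform Lipschitz constant. Your estimate $\tfrac{d}{dt}\Psi \le C\,\Psi$ plus Gr\"onwall yields only $\|\cS(t)\overline{u}-\cS(t)\overline{v}\|\le L\,e^{Ct}\|\overline{u}-\overline{v}\|$, which is strictly weaker and contradicts your own earlier (correct) description of $\Psi$ as \emph{almost nonincreasing}. The actual mechanism is that the Liu--Yang weights are chosen so that every positive contribution to $\tfrac{d}{dt}\Psi$ from crossings and interactions is absorbed by the decrease of the interaction potential $Q$ built into the weights, giving $\Psi(u(t),v(t))\le \Psi(u(s),v(s)) + O(\varepsilon)\,(t-s)$ with the error vanishing as the front-tracking parameter $\varepsilon\to 0$; the uniform equivalence $c^{-1}\|u-v\|_{L^1}\le \Psi(u,v)\le c\,\|u-v\|_{L^1}$ then yields the time-uniform $L$. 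Also, in (ii) the claim that the limit of front tracking is unique is not ``immediate from the construction'': Helly gives only subsequential limits, and one must use the already-established Lipschitz semigroup together with the error formula
\begin{align*}
\|\cS(T)\overline{u}-v(T)\|_{\lebnonweight{1}{\R}{\R^m}} \le L \int_0^T \liminf_{h\searrow 0}\frac{\|v(t+h)-\cS(h)v(t)\|_{\lebnonweight{1}{\R}{\R^m}}}{h}\,\mathrm{d}t
\end{align*}
to show every front-tracking approximation converges to the semigroup trajectory, which is the same telescoping device you correctly invoke for uniqueness of the semigroup itself.
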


\begin{remark}[Uniqueness]
 While \eqref{eq:IVP} may have several entropy admissible solutions there is one and only one entropy admissible solution induced by the SRS; in this sense entropy admissible solutions induced by SRS are unique.
 It was proven in \cite{BressanLeFloch1997} that the SRS-induced entropy admissible solution is the only entropy admissible solution satisfying certain additional stability properties, cf. 
 \cite[(A2),(A3)]{BressanLeFloch1997}.
\end{remark}

\begin{remark}[Domain of SRS]
 The domain of the SRS is discussed in  \cite[equation (1.3)]{BressanLeFloch1997}.
  Note that it can always be replaced by a smaller set in order to make sure that additional
   conditions (such as 
 \eqref{def:domainSemigroup}) hold.
 \end{remark}

Additionally, we will use the following 
result on the stability of the SRS. In particular, we can quantify 
how much the SRS-induced entropy admissible solution varies if the flux is changed.
\begin{theorem}[\hspace{-1pt}\cite{BianchiniColombo2002},  Corollary 2.5]\label{thm:stabilityFlux}
Let the flux function $\Flux$ satisfy  \eqref{ass:flux}
and assume
\begin{align} \label{def:domainSemigroup}
\cD(\Flux) \subseteq \{u \in L^1(\R,\C)~|~\TV(u) \leq M\},
\end{align}
for some suitable positive $M\in \R$ and some compact set $\C\subset \R^m$. For $t>0$ we denote by $\cS(t,\Flux)$ the SRS from \thmref{thm:entropySol}, associated with 
the flux function $\Flux$.

Then there exists a constant $C>0$, such that for any flux function
$ \tilde{\Flux}$, satisfying \eqref{ass:flux} and $\cD(\Tilde{\Flux}) \subseteq \cD(\Flux)$, it holds that
\begin{align}
\|\cS(t,\Flux)u- \cS(t,\tilde{\Flux})u\|_{L^1(\R,\R^m)} \leq C t \|\D \Flux -\D\tilde{\Flux}\|_{C^0(\U, \R^m)},
\end{align}
for all $u\in \cD(\Tilde{\Flux})$.
\end{theorem}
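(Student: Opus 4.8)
Since this statement is quoted verbatim from \cite{BianchiniColombo2002}, one may simply invoke Corollary~2.5 there; nevertheless, the plan I would follow to establish it rests on Bressan's integral error estimate for Lipschitz semigroups. The idea is to view the trajectory $\tau\mapsto\cS(\tau,\tilde{\Flux})u$ of the second semigroup as an \emph{approximate} trajectory of the first and to measure, integrated in time, the instantaneous rate at which it fails to solve the $\Flux$-problem.

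First I would recall the fundamental error estimate for Lipschitz continuous semigroups: if $\cS(\cdot,\Flux)$ has Lipschitz constant $L$ (property (i) of \thmref{thm:entropySol}) and $w:[0,t]\to\cD(\Flux)$ is Lipschitz in $L^1$, then
\begin{align*}
\|w(t)-\cS(t,\Flux)w(0)\|_{\lebnonweight{1}{\R}{\R^m}}\leq L\int_0^t\liminf_{h\to 0+}\frac{1}{h}\,\big\|w(\tau+h)-\cS(h,\Flux)w(\tau)\big\|_{\lebnonweight{1}{\R}{\R^m}}\,\mathrm{d}\tau .
\end{align*}
Choosing $w(\tau):=\cS(\tau,\tilde{\Flux})u$, the left-hand side is exactly the quantity to be bounded, because $w(0)=u$ and $\cS(t,\Flux)w(0)=\cS(t,\Flux)u$. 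The hypothesis $\cD(\tilde{\Flux})\subseteq\cD(\Flux)$ guarantees that $w$ stays in the domain of the $\Flux$-semigroup, and property (i) applied to the $\tilde{\Flux}$-semigroup gives the required Lipschitz continuity of $w$, so the estimate is applicable.

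Second, I would bound the integrand, i.e.\ the local discrepancy of the two semigroups issued from the common state $v:=w(\tau)$. On short time scales both $\cS(h,\Flux)v$ and $\cS(h,\tilde{\Flux})v$ are superpositions of the respective Riemann solutions, whose wave speeds are governed by the eigenvalues of $\D\Flux$ and $\D\tilde{\Flux}$. Perturbing the flux therefore shifts each front by a distance of order $h\,\|\D\Flux-\D\tilde{\Flux}\|_{C^0(\U,\R^m)}$; summing the resulting $L^1$ discrepancies over all fronts and using that $v\in\cD(\tilde{\Flux})$ has $\TV(v)\leq M$ yields a local rate bounded by $C'\,M\,\|\D\Flux-\D\tilde{\Flux}\|_{C^0(\U,\R^m)}$, uniformly in $\tau$. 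Inserting this uniform bound into the integral gives $L\,t\,C'\,M\,\|\D\Flux-\D\tilde{\Flux}\|_{C^0(\U,\R^m)}$, and absorbing $L$, $C'$ and $M$ into a single constant $C>0$ produces the claimed inequality.

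The hard part will be the local rate estimate in the second step: rigorously showing that the instantaneous divergence of the two semigroups is controlled \emph{linearly} by $\|\D\Flux-\D\tilde{\Flux}\|_{C^0(\U,\R^m)}$ requires the full stability analysis of the Riemann solver under flux perturbations, including control of the intermediate states and of wave interactions for genuinely nonlinear and linearly degenerate fields. This is precisely the technical core carried out in \cite{BianchiniColombo2002}, which is why the cleanest route in the present article is to cite their Corollary~2.5 rather than reproduce the argument.
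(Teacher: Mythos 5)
The paper offers no proof of this theorem at all: it is imported verbatim from \cite{BianchiniColombo2002}, Corollary~2.5, so simply invoking that reference, as you do, is exactly what the paper does. Your sketch via Bressan's integral error estimate with $w(\tau):=\cS(\tau,\tilde{\Flux})u$ is moreover a faithful outline of how the cited result is actually established, and you correctly identify (and appropriately defer to the reference for) the one genuinely hard step, namely the linear control of the instantaneous Riemann-solver discrepancy by $\|\D\Flux-\D\tilde{\Flux}\|_{C^0(\U,\R^m)}$.
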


\let\oldemptyset\emptyset
\subsection{Existence and Uniqueness of Random Entropy Solutions} \label{subsec:existenceanduniquenes}
We now consider the random hyperbolic conservation law \eqref{eq:RIVP} and introduce the notion of a random entropy admissible solution for \eqref{eq:RIVP}.  
We say that $\eta\in \leb{1}{\Xi}{C^2(\U;\R^m)}$, \ $q \in \leb{1}{\Xi}{C^2(\U;\R^m)}$ form a random entropy/entropy-flux pair
if $\eta(\cdot,y)$ is strictly convex $\Pas$
and if $\eta$ and $q$ satisfy $\D \eta(\cdot,y) \D \Flux(\cdot,y) =\D q(\cdot,y)$, $\Pas$.
We assume that the random conservation law \eqref{eq:RIVP} is equipped with at least one random entropy/entropy-flux pair.

We define random entropy admissible solutions as path-wise (w.r.t. $y$) 
entropy admissible solutions of \eqref{eq:RIVP}. In this sense the notion of random entropy admissible solutions generalizes the notion of entropy admissible solutions 
in a similar way as the notion of random entropy solutions, 
introduced by Schwab and Mishra \cite{MishraSchwab2012} generalizes the notion of entropy solutions according to Kru\v{z}kov \cite{kruvzkov1970}.


\begin{definition}[Random entropy admissible solution]
A function $\stochSol \in \leb{1}{\Xi}{L^1((0,T)\times \R;\U)}$ is called a random entropy admissible
solution of \emph{\eqref{eq:RIVP}}, if it satisfies the following conditions:
\begin{enumerate}
\item It is a weak solution:
\begin{align} \label{eq:randomentropySol}
\int \limits_0^T \int \limits_{\R} \Big( \stochSol(t,x,y) \cdot \partial_t \phi(t,x) & + \Flux(\stochSol(t,x,y),y) \cdot \partial_x\phi(t,x)\Big)~\mathrm{d}x \mathrm{d}t \notag
\\  &=  - \int \limits_{\R} \stochSol^0(x,y) \cdot \phi(0,x)~ \mathrm{d}x,
\end{align}
$\Pas$ and for all $\phi\in C_c^\infty([0,T)\times \R;\R^m)$.
\item It satisfies the weak entropy inequality:
\begin{align} \label{ineq:randomentropySol}
\int \limits_0^T \int \limits_{\R} \Big( \eta(\stochSol(t,x,y),y) \partial_t \Phi(t,x)  & +q(\stochSol(t,x,y),y)  \partial_x\Phi(t,x)\Big)~\mathrm{d}x \mathrm{d}t \notag
\\ & ~\geq- \int \limits_{\R} \eta(\stochSol^0(x,y),y) \Phi(0,x)~ \mathrm{d}x , 
\end{align}
$\Pas$ and for all $\Phi\in C_c^\infty([0,T)\times \R;\R_+)$.
\end{enumerate}
\end{definition}

\begin{remark}\label{rem:randomentropSol}
\begin{enumerate}[label=(\roman*)]
 \item Let $u$ be a random entropy admissible solution, then for almost any fixed realization $\tilde{y}\in \Xi$, the function $u(\cdot,\cdot,\tilde{y})$ is an entropy admissible solution of the deterministic version of  \emph{\eqref{eq:RIVP}}
in the sense of \secref{subsec:detCL}.
\item The function $\stochSol(t,x,y) := \big(\cS(t,\Flux(\cdot,y)) u^0(\cdot,y)\big)(x)$, 
where $\{\cS(t,\Flux(\cdot,y))\}_{t\geq0}$ is the SRS from \thmref{thm:entropySol} associated with the flux-function $\Flux(\cdot,y)$, obviously satisfies \eqref{eq:randomentropySol} and \eqref{ineq:randomentropySol}
$\Pas$. Theorem \ref{thm:randomEntropySol} below discusses the regularity of $u$ w.r.t. $y$,
i.e. under which conditions $u$ is indeed a random entropy admissible solution of \eqref{eq:RIVP}.
\end{enumerate}

\end{remark}

To ensure the existence of a random entropy admissible solution of \eqref{eq:RIVP} by applying \thmref{thm:entropySol} and \thmref{thm:stabilityFlux} 
path-wise in $\Xi$ we make the following assumptions:
\begin{enumerate}[label=(R\arabic*),ref=R\arabic*]
 \setcounter{enumi}{2}
  \item We define $\cD := \bigcap_{y\in \Xi} \cD(\Flux(\cdot,y))$, where $\cD(\Flux(\cdot,y))$ is the domain
  of the SRS from \eqref{def:domainSemigroup} in \thmref{thm:stabilityFlux}.
  We assume that $\cD \neq  \text{\O} $ and $\stochSol^0(\cdot,y)\in \cD$, $\Pas$. \label{ass:randominitialdatainD}
    \item There exists a compact and convex set $\C\subset \U$, s.t.
  $\cS(t,\Flux(\cdot,y)) u^0(\cdot,y)(x) \in \C$, a.e. $(t,x,y)\in (0,T)\times \R\times ~\Xi$ and $u^0(x,y) \in \C$,  a.e. $(x,y) \in \R \times ~\Xi$. \label{ass:randomcompactSet}
\end{enumerate}


\begin{theorem} \label{thm:randomEntropySol}
Let the assumptions \eqref{ass:randominitialdata}-\eqref{ass:randomcompactSet} hold. 
For $\Pas$ we define $\stochSol(t,x,y) := \cS(t,\Flux(\cdot,y)) u^0(\cdot,y)(x)$, 
where $\{\cS(t,\Flux(\cdot,y))\}_{t\geq0}$ is the SRS from \thmref{thm:entropySol} associated with the flux-function $\Flux(\cdot,y)$. \\
Then $\stochSol$ is a random entropy admissible solution of \eqref{eq:RIVP}. 
It is unique in the sense that it is the only random entropy admissible solution 
which path-wise coincides with the SRS-induced entropy admissible solution of 
the deterministic version of \eqref{eq:RIVP}.
\end{theorem}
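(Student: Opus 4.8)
The plan is to split the claim into three parts: (i) for $\Pas$ the field $\stochSol(\cdot,\cdot,y)$ is a deterministic entropy admissible solution; (ii) the map $y \mapsto \stochSol(\cdot,\cdot,y)$ is strongly measurable and lies in $\leb{1}{\Xi}{L^1((0,T)\times \R;\U)}$; and (iii) uniqueness. Part (i) is essentially a restatement of \rmref{rem:randomentropSol}(ii): by construction $\stochSol(\cdot,\cdot,y)=\cS(t,\Flux(\cdot,y)) u^0(\cdot,y)$, and assumptions \eqref{ass:randomflux}, \eqref{ass:randominitialdatainD}, \eqref{ass:randomcompactSet} guarantee that for $\Pas$ the flux $\Flux(\cdot,y)$ satisfies \eqref{ass:flux} and that $u^0(\cdot,y)\in\cD\subseteq\cD(\Flux(\cdot,y))$, so \thmref{thm:entropySol}(ii) applies path-wise and yields the weak-solution identity \eqref{eq:randomentropySol} and the entropy inequality \eqref{ineq:randomentropySol} for $\Pas$ and all admissible test functions.

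The technical core is part (ii). First I would establish integrability. Using the $L^1$-Lipschitz bound \thmref{thm:entropySol}(i) with the reference state $\overline v\equiv 0$ (which belongs to $\cD$ since $\TV(0)=0$ and is a stationary solution, $\cS(t)0=0$), I obtain $\|\stochSol(t,\cdot,y)\|_{\lebnonweight{1}{\R}{\R^m}}\le L\,\|u^0(\cdot,y)\|_{\lebnonweight{1}{\R}{\R^m}}$, hence after integrating in $t\in(0,T)$,
\begin{align*}
\|\stochSol(\cdot,\cdot,y)\|_{L^1((0,T)\times\R;\U)} \le L\,T\,\|u^0(\cdot,y)\|_{\lebnonweight{1}{\R}{\U}}.
\end{align*}
Integrating against the density $\densityfunction$ and invoking \eqref{ass:randominitialdata} gives $\stochSol\in\leb{1}{\Xi}{L^1((0,T)\times\R;\U)}$, provided the Lipschitz constant $L$ can be chosen independently of $y$; here I would use that \eqref{ass:randominitialdatainD} forces every datum into the common domain $\cD$ with a uniform $\TV$-bound $M$ and that \eqref{ass:randomcompactSet} confines all states to the fixed compact set $\C$, so that the constants governing the SRS depend only on $M$ and $\C$.

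For the measurability I would exploit that the solution map is continuous on the relevant data set. Combining the Lipschitz dependence on the initial datum \thmref{thm:entropySol}(i) with the flux-stability estimate \thmref{thm:stabilityFlux} (whose constant is uniform thanks to the fixed $\cD$ and $\C$), the map $\Phi\colon (v,G)\mapsto \cS(\cdot,G)v$ is continuous from the set of admissible pairs (data in $\cD$, fluxes obeying \eqref{ass:flux} with $\cD(G)\subseteq\cD$) into $L^1((0,T)\times\R;\U)$, via the triangle inequality
\begin{align*}
\|\cS(\cdot,G)v-\cS(\cdot,\tilde G)\tilde v\|_{L^1} \le L\,\|v-\tilde v\|_{\lebnonweight{1}{\R}{\R^m}} + C\,T\,\|\D G-\D\tilde G\|_{C^0(\U,\R^m)}.
\end{align*}
Since $u^0\in\leb{1}{\Xi}{L^1(\R;\U)}$ and $\Flux\in\leb{2}{\Xi}{C^2(\U;\R^m)}$ are strongly measurable, the data map $y\mapsto(u^0(\cdot,y),\Flux(\cdot,y))$ is strongly measurable; composing a strongly measurable map with the continuous (hence Borel) map $\Phi$ shows that $y\mapsto\stochSol(\cdot,\cdot,y)=\Phi(u^0(\cdot,y),\Flux(\cdot,y))$ is strongly measurable into the separable space $L^1((0,T)\times\R;\U)$. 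Together with the integrability bound this proves $\stochSol\in\leb{1}{\Xi}{L^1((0,T)\times\R;\U)}$, and with part (i) that $\stochSol$ is a random entropy admissible solution.

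Uniqueness is then immediate: if $\stochSol^\ast$ is any random entropy admissible solution that path-wise coincides with the SRS-induced deterministic solution, then for $\Pas$ the deterministic uniqueness in \thmref{thm:entropySol}(ii) forces $\stochSol^\ast(\cdot,\cdot,y)=\cS(t,\Flux(\cdot,y)) u^0(\cdot,y)=\stochSol(\cdot,\cdot,y)$, whence $\stochSol^\ast=\stochSol$ in $\leb{1}{\Xi}{L^1((0,T)\times\R;\U)}$. I expect the main obstacle to be the measurability step, specifically verifying that the flux-stability constant and the $L^1$-Lipschitz constant are genuinely uniform in $y$ on the common domain $\cD$ and compact set $\C$, so that $\Phi$ is continuous with $y$-independent moduli; once that uniformity is in hand, the composition-of-measurable-with-continuous argument and the integrability bound are routine.
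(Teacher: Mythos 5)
Your proposal follows essentially the same route as the paper: path-wise application of Theorem~\ref{thm:entropySol}, then measurability of $y\mapsto u(\cdot,\cdot,y)$ obtained by composing the measurable data map $y\mapsto(u^0(\cdot,y),\Flux(\cdot,y))$ with the solution operator, whose continuity on $L^1(\R;\R^m)\times C^2(\U;\R^m)$ is exactly the paper's triangle-inequality estimate combining Theorem~\ref{thm:entropySol}(i) and Theorem~\ref{thm:stabilityFlux}. You go beyond the paper in explicitly verifying the $\leb{1}{\Xi}{L^1}$ integrability and spelling out uniqueness, both of which the paper leaves implicit; note, though, that for the measurability step pointwise continuity of the solution map suffices, so the uniformity of the constants you flag as the main obstacle is only needed for the integrability bound. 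One slip there: the reference state $\overline v\equiv 0$ need not lie in $\cD$ (indeed $0\notin\U$ for the Euler equations, where the zero state is vacuum), so replace it by an arbitrary fixed $\overline v\in\cD$ and use that $t\mapsto\cS(t,\Flux(\cdot,y))\overline v$ is bounded in $L^1$ on $[0,T]$ by Theorem~\ref{thm:entropySol}(i); with that patch your argument is sound.
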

\begin{proof}
The function $\stochSol$ is path-wise the unique SRS-induced entropy solution of \eqref{eq:RIVP} by construction.
%
Note that we have assumed $u^0(\cdot,y) \in \cD \subset \cD(\cdot, F(\cdot, y))$, $\Pas$.
It remains to show, that $u$ is a random variable, i.e.  
$$\Big( \Xi, \mathcal{B}(\Xi)\Big) \ni y \mapsto u(\cdot,\cdot,y)\in \Big(\lebnonweight{1}{(0,T)\times \R}{\R^m},  \mathcal{B}((\lebnonweight{1}{(0,T)\times \R}{\R^m}))\Big)$$
 is a measurable map.
To this end we define the vector space
\begin{align*}
E_1:= \lebnonweight{1}{\R}{\R^m} \times C^2(\U;\R^m),
\end{align*}
equipped with the norm
\begin{align*}
\|(\overline{u},\overline{\Flux})\|_{E_1}:= \|\overline{u}\|_{\lebnonweight{1}{\R}{\R^m}} + \|\overline{\Flux}\|_{C^2(\U;\R^m)}.
\end{align*}
Using \thmref{thm:entropySol} (i) and \thmref{thm:stabilityFlux}, which we can apply due to 
assumptions \eqref{ass:randominitialdatainD} and \eqref{ass:randomcompactSet}, we deduce
\begin{align*}
 \|  \cS&(t,\Flux(\cdot,y)) u^0(\cdot,y) - \cS(t,\Flux(\cdot,\tilde{y})) u^0(\cdot,\tilde{y})\|_{\lebnonweight{1}{\R}{\R^m}} \\
 \leq &~\| \cS(t,\Flux(\cdot,y)) u^0(\cdot,y) - \cS(t,\Flux(\cdot,\tilde{y})) u^0(\cdot,{y})\|_{\lebnonweight{1}{\R}{\R^m}}  \\
 &+   \| \cS(t,\Flux(\cdot,\tilde{y})) u^0(\cdot,{y})- \cS(t,\Flux(\cdot,\tilde{y})) u^0(\cdot,\tilde{y})\|_{\lebnonweight{1}{\R}{\R^m}} \\
 \leq & ~C t \|\D \Flux(\cdot,y) -\D\Flux(\cdot,\tilde{y})\|_{C^0(\U, \R^m)} + L \|u^0(\cdot,y)- u^0(\cdot,\tilde{y})\|_{\lebnonweight{1}{\R}{\R^m}} \\
 \leq& ~Ct \|\Flux(\cdot,y)-\Flux(\cdot,\tilde{y})\|_{C^2(\U;\R^m)} + L \|u^0(\cdot,y)- u^0(\cdot,\tilde{y})\|_{\lebnonweight{1}{\R}{\R^m}}, 
\end{align*}
for $\tilde{\P}$-a.s. $y,\tilde{y}\in \Xi$.

Hence, the mapping $\mathrm{S}(t): (\overline{u},\overline\Flux) \ni E_1 \to \lebnonweight{1}{\R}{\R^m}$, $\mathrm{S}(t)(\overline{u},\overline\Flux):=\cS(t,\overline{\Flux})\overline{u}(\cdot)$ is continuous for all $t>0$. 
Due to the finite time horizon we immediately deduce that the mapping 
$$ \mathrm{S}: E_1  \to \lebnonweight{1}{(0,T)\times \R}{\R^m},~\mathrm{S}(\overline{u},\overline\Flux):=\cS(\cdot,\overline{\Flux})\overline{u}(\cdot)$$ is also continuous.
Finally, it follows from assumptions \eqref{ass:randominitialdata} and \eqref{ass:randomflux} that the mapping 
$$   \mathrm{S}_0: \Big(\Xi,\mathcal{B} (\Xi)\Big)  \to  \Big(E_1, \mathcal{B}(E_1)\Big), ~\mathrm{S}_0(y) := (u^0(\cdot, y), \Flux(\cdot,y))   $$
 is measurable. Thus, $u(\cdot,\cdot,y)= \cS(\cdot,\Flux(\cdot,y))u^0(\cdot,y)= \mathrm{S}\circ \mathrm{S}_0(y)$ is a composition of measurable mappings and hence measurable itself. 
\end{proof}

\section{Space-Time Stochastic Discretization and Reconstructions} \label{sec:discretization}
A major goal of this paper is to prove an a posteriori estimate for a large class of numerical 
approximations of \eqref{eq:RIVP}. In particular, we consider numerical schemes that combine 
Stochastic Collocation (SC)
with Runge--Kutta Discontinuous Galerkin (RKDG) schemes.
To this end,  we recapitulate the SC method 
for the (non-intrusive) discretization of the random space  $\randomSpace$ as for example in \cite{abgrallmishra2017 , HesthavenXiu2005}. Additionally, we recall the Multi-Element method
which decomposes the random space $\Xi$ into smaller elements to allow for a local interpolation
in the random space \cite{WanKarniadakis2006a}.
Finally, we describe a reconstruction of the numerical solution as a Lipschitz continuous function.
The reconstruction will be used in the a posteriori error estimate in \secref{sec:aposteriori}.
\subsection{The Stochastic Collocation Method}
The idea of the SC method is to approximate the random entropy admissible solution of \eqref{eq:RIVP} by  a polynomial interpolant in the random space, where the interpolant is supposed to satisfy \eqref{eq:RIVP}
at collocation points $\{y_i\}_{i=0}^M \subset \R$, $M \in \N$. 
The exact solution $\stochSol(\cdot,\cdot, y_i)$ at a given collocation point $y_i$, $i=0,\ldots,M$, is then approximated by a discrete solution $\stochSol_h(\cdot,\cdot, y_i)$ 
using any suitable numerical method. 

For a multi-dimensional random space $\randomSpace \subset \R^N$, we define $\Xi_i:= \xi_i(\Omega)$ and follow \cite{BabuskaNobileTempone2010} to first define the space $\mathcal{P}_q(\randomSpace)$ 
of tensor product polynomials of maximal degree 
 $q \in \N_0$ by
\begin{align*}
\mathcal{P}_q(\randomSpace):= \bigotimes\limits_{i=1}^{N}\mathcal{P}_{q}(\randomSpace_i), \quad \mathcal{P}_{q}(\randomSpace_i):=\{p:\randomSpace_i \to \R~|~p \text{ is a polynomial of degree } q \}.
\end{align*} 

\begin{remark}
Our analysis does not depend on the structure of the approximating space, i.e.~instead of considering a fixed polynomial degree $q \in \N_0$ for every random dimension 
we could also consider  variable polynomial degrees $q_i \in \N$, $i=1,\ldots,N$.
Moreover, instead of using a tensor product space we could also consider the complete polynomial space, cf. \cite{HesthavenXiu2005} or 
sparse grids, cf. \cite{GrieblBungartz2004, NobileTemponeWebster2008}.
\end{remark}

We let $\mathcal{K}:= \{\globalIdx=(k_1,\ldots,k_N)^\top  \in \N_0^{N}: k_i \leq q,~ i=1\ldots,N \}$
be the corresponding multi-index set and define the collocation points $y_\globalIdx=(y_{1,k_1},\ldots,y_{N,k_N})\in \randomSpace$, $\globalIdx\in\cK$. 
As a basis of $\mathcal{P}_{q}(\Xi_i)$ we choose the Lagrange basis $\{l_{i,k} \}_{k=0}^{q}$ associated with the collocation points $\{y_{i,k}\}_{k=0}^{q}$, such that
\begin{align*}
l_{i,k}(y_{i,j})= \delta_{k,j},\qquad \forall ~j,k=0,\ldots,q,
\end{align*}
for all $i=1,\ldots,N$.
We then define the multivariate Lagrange polynomials via
\begin{align*}
l_{\globalIdx}(y_\mathrm{j}):= l_{1,k_1}(y_{1,j_1})\cdot \ldots \cdot l_{N,k_N}(y_{N,j_N}),\quad \mathrm{j},\globalIdx \in \mathcal{K}.
\end{align*}
Using the collocation points $\{y_\globalIdx\}_{\globalIdx\in \cK}$ as input parameters in \eqref{eq:RIVP} yields $\text{card}(\cK)=(q+1)^{N}$ (uncoupled) collocated initial value problems:
\begin{align}\label{eq:DIVP}\tag{CIVP}  
\begin{cases} 
\partial_t \stochSol(t,x,y_\globalIdx)+ \partial_x \Flux(\stochSol(t,x,y_\globalIdx),y_\globalIdx)=0 ,&(t,x) 
\in (0,T)\times \Lambda,
\\ \stochSol(0,x,y_\globalIdx)= \stochSol^0(x,y_\globalIdx), ~ &\Lambda \in \R.
\end{cases}
\end{align}
Here and in the following we consider $\Lambda \in \{[0,1]_{per},\R\}$.
\begin{remark}
The well-posedness result in \thmref{thm:randomEntropySol} only covers $\Lambda = \R$. 
However, the deterministic well-posedness results are based on local estimates and we, therefore,
believe that it can be extended to cover the case $\Lambda = [0,1]_{per}$, as well. 
\end{remark}
Each of the deterministic hyperbolic systems in \eqref{eq:DIVP} can be solved using the
RKDG method described in \appAref{appendix:A}.
For every collocation point $y_\globalIdx$ we denote the corresponding numerical approximation at time $t_n=t_n(y_\globalIdx)$ by $\numSol^n(\cdot,y_\globalIdx):= \numSol(t_n,\cdot, y_\globalIdx)$. Let us assume that the time-partition $\{t_n\}_{n=0}^{\ntCells}$ is the same for every collocation point $\{y_\globalIdx\}_{\globalIdx\in \cK}$.
The numerical approximation of \eqref{eq:RIVP} at time $t=t_n$ can then be written as
\begin{align} \label{def:stochSolSC}
\numSol^n(x,y):= \sum \limits_{\globalIdx\in \cK} \numSol^n(x,y_\globalIdx) l_\globalIdx(y).
\end{align}
An important aspect of the SC method is the choice the collocation points $\{y_{i,k} \}_{k=0}^{q}\subset \Xi_i$. 
Depending on the distribution of the  random variable $\xi_i$ we choose the collocation points as zeros of the corresponding (orthogonal) chaos polynomials \cite{XiuKarniadakis2002}. 
For example, if $\xi_i\sim \mathcal{U}(a,b)$ is uniformly distributed, we choose $\{y_{i,k} \}_{k=0}^{q}$ to be the roots of the $(q+1)$-th Legendre polynomial.
For a Gaussian distribution we use the roots of the Hermite polynomials accordingly.

One can then approximate the mean of $\numSol^n(x,\cdot)$ via numerical quadrature, i.e.
\begin{align*}
\E\Big(\numSol^n(x,\cdot)\Big) = \int \limits_{\randomSpace} \numSol^n(x,y)~p_{\xi}(y)\mathrm{d}y \approx \sum \limits_{\globalIdx\in \cK} \numSol^n(x,y_\globalIdx) w_\globalIdx.
\end{align*}
Here $w_\globalIdx$ are products of the corresponding one-dimensional weights.

\subsection{The Multi-Element Stochastic Collocation Method} \label{subsec:ME}                  
A major drawback of any global approximation  approach in $\Xi$ for hyperbolic conservation laws is that, due to the Gibbs phenomenon, the interpolant may
oscillate for discontinuous solutions, cf. \cite{WanKarniadakis2005, PoetteDespresLucor2009}. To overcome this issue, we employ the Multi-Element (ME) approach as presented in \cite{WanKarniadakis2006a}, i.e. we subdivide $\Xi$ into disjoint elements and consider a local approximation of  \eqref{eq:RIVP} on every random element.

For the ease of presentation we assume that $\Xi=[0,1]^N,$ and let $0= d_1 < d_2 < \ldots < d_{\MEElements+1}= 1$ be a decomposition of $[0,1]$.
We define $\randomElement{n} := [d_{n}, d_{n+1} )$, for $ n= 1,\ldots, \MEElements-1,$ and 
$\randomElement{\MEElements} := [d_{\MEElements}, d_{\MEElements+1}]$.
Introducing the set $\mathcal{M}:= \{\MEIndex=(m_1,\ldots,m_N)^\top  \in \N_0^{N}: m_i \leq \MEElements,~ i=1\ldots,N \}
$ allows us to define for $\MEIndex\in \mathcal{M}$, the Multi-Element $\randomElement{\MEIndex} := \bigtimes \limits_{i=1}^N \randomElement{m_i}$.
Hence, we consider a new local random variable $\xi^{\MEIndex}: \xi^{-1}(D_{\MEIndex}) \to \randomElement{\MEIndex}$ 
on the local probability space $(\xi^{-1}(D_{\MEIndex}),\F\cap \xi^{-1}(D_{\MEIndex}),\P(\cdot | \xi^{-1}(D_{\MEIndex})))$.
Using Bayes' rule we can compute the local probability 
density function of $\xi^{\MEIndex}$ via
\begin{equation}\label{def:CondDensity}
w_{\xi^{\MEIndex}}:=w_{\xi}(y^\MEIndex|\xi^{-1}(D_{\MEIndex})) = \frac{w_\xi(y^\MEIndex)}{\P(\xi^{-1}(D_{\MEIndex}))} ,\qquad y^\MEIndex \in D_\MEIndex,
\end{equation}
where $\P(\xi^{-1}(D_{\MEIndex}))>0$ for $\MEIndex \in \mathcal{M}$ can be assumed w.l.o.g., due to the independence of the corresponding random variables.
We parametrize the uncertain input in \eqref{eq:RIVP}  using the local random variable $\xi^{\MEIndex}$ and consider a local approximation on every $\randomElement{\MEIndex}$
at time $t=t_n$, $n=0,\ldots,\ntCells$,
\begin{equation}
\numSol^{\MEIndex}(t_n,x,y) = \sum_{\globalIdx\in\cK} \numSol(t_n,x, y_\globalIdx^\MEIndex) l_{\globalIdx}^{\MEIndex} (y),
\end{equation}
for all $\MEIndex  \in \mathcal{M}$. Here, $\{y_{\globalIdx}^{\MEIndex} \}_{\globalIdx\in \cK} \subset D_\MEIndex$ are the local collocation points in $D_\MEIndex$, $\MEIndex \in \mathcal{M}$.
The global Multi-Element Stochastic Collocation (ME-SC) approximation at time $t=t_n$, can then be written as
\begin{align}\label{def:globalSG}
\numSol(t_n,x,y)= \sum_{ \MEIndex \in \mathcal{M}}  \numSol^\MEIndex(t_n,x,y) \chi_{\D_\MEIndex}(y) = \sum_{ \MEIndex \in \mathcal{M}}  \sum_{\globalIdx\in\cK} \numSol(t_n,x, y_\globalIdx^\MEIndex) l_{\globalIdx}^{\MEIndex} (y) \chi_{D_\MEIndex}(y),
\end{align}
where $\chi_{D_\MEIndex}$ is the indicator function of $D_\MEIndex$.
\subsection{Space-Time-Stochastic Reconstructions}\label{subsec:DG}
For the space-time discretization of \eqref{eq:DIVP} we use the  RKDG framework from \cite{cockburn1998}. 
For ease of presentation, we move the description of the RKDG scheme and the computation of its 
space-time reconstruction to \appAref{appendix:A}.

As discussed in \appAref{appendix:A}, we have for each collocation point $y_{\globalIdx}$, $\globalIdx \in \cK$,
a computable space-time reconstruction $\reconst{st}(y_{\globalIdx}) = \reconst{st}(\cdot,\cdot, y_{\globalIdx})  \in W_\infty^1((0,T);V_{p+1}^s\cap C^0(\Lambda))$  of the
numerical approximation $\numSol(\cdot,\cdot,y_{\globalIdx})$, where $V_{p+1}^s$ denotes the space of piece-wise polynomials of degree $p+1$ on a triangulation of $\Lambda$.
This allows us to define the space-time residual as follows.
\begin{definition}[Space-time residual]
We call the function $\residual{st}(y_\globalIdx) :=\residual{st}(\cdot,\cdot,y_\globalIdx) \in \LpSpace{2}{(0,T)\times \Lambda}{\R^m}$, defined by
\begin{align}\label{eq:STResidualColl}
\residual{st}(t,x, y_\globalIdx):= \partial_t \reconst{st}(t,x, y_\globalIdx) + \partial_x \Flux(\reconst{st}(t,x, y_\globalIdx),y_\globalIdx),
\end{align}
the space-time residual associated with the collocation point $y_\globalIdx$, for all~$\globalIdx\in\cK$.
\end{definition} 
This residual is required in the subsequent analysis.
In the next step we expand the space-time reconstruction into the corresponding random basis, i.e. in the Lagrange basis, 
to obtain the so-called space-time-stochastic reconstruction.

\begin{definition}[Space-time-stochastic reconstruction]\label{def:STSReconst}
We call the function $\reconst{sts} \in \mathcal{P}_q(\randomSpace)\otimes \big( W_\infty^1(0,T); V_{p+1}^s \cap C^0(\Lambda)\big)$ defined by
$$\reconst{sts}(t,x,y):= \sum \limits_{\globalIdx\in \cK} \reconst{st}(t,x,y_\globalIdx)l_\globalIdx(y),$$
the 
space-time-stochastic reconstruction of the numerical approximation $\numSol$ of \eqref{eq:RIVP} (see \eqref{def:stochSolSC}).
\end{definition} 
Similar to the space-time reconstruction, we may plug $\reconst{sts}$ into the random conservation law \eqref{eq:RIVP} to obtain the so called
space-time-stochastic residual.
\begin{definition}[Space-time-stochastic residual] \label{def:STSResidual}
We define the space-time-stochastic residual $\residual{sts} \in \leb{2}{\Xi}{\Leb^2((0,T)\times \Lambda;\R^m)}$ by
\begin{align}\label{eq:STSResidual}
\residual{sts}(t,x,y):= \deriv{t} \reconst{sts}(t,x,y) + \deriv{x}\Flux(\reconst{sts}(t,x,y), y) .
\end{align}
\end{definition}
We also need this residual for the upcoming error analysis.
\section{A Posteriori Error Estimate and Adaptive Algorithms }\label{sec:aposteriori}
As already mentioned in the introduction, our a posteriori error estimate relies on the relative entropy stability framework of Dafermos and DiPerna, see \cite{Dafermos2016} and references therein. 
The relative entropy method allows to measure the $L^2$-distance of two
functions, one of them required to be Lipschitz continuous in space and time. This is the reason why we reconstructed the numerical solution and computed the space-time-stochastic reconstruction
as a Lipschitz function.

Before stating the main a posteriori error estimate, we establish bounds on the derivatives of the flux function and the entropy,
as they enter the upper bounds in the main estimate.
Due to \assref{ass:randomcompactSet} from \secref{subsec:existenceanduniquenes}
and the compactness of $\C$, there  exist $\Pas$ constants $0<\fluxConstant(y) < \infty$ and $0<\lowerEta(y) < \upperEta(y)< \infty$, such that,
\begin{align*}
|v^\top H_u \Flux(u,y)v|\leq \fluxConstant(y)|v|^2, \quad \lowerEta(y)|v|^2 \leq v^\top H_u \eta(u,y)v \leq \upperEta(y) |v|^2, \\ \forall v\in \R^m, \forall u\in \C.
\end{align*}
Here, for a generic function $f$, $H_uf$ denotes its Hessian matrix which contains all second order derivatives
with respect to $u$.
\subsection{A Posteriori Error Estimate and Error Splitting}
 
We now have all ingredients together to state the following main a posteriori error estimate
that can be directly derived from Theorem 5.5 in \cite{GiesselmannMakridakisPryer15}.
\begin{theorem}[A posteriori error bound for the numerical solution] 
\label{thm:stochApostNumSol}
Let $u$ be a random entropy admissible solution of \eqref{eq:RIVP}. Let the reconstruction $\reconst{sts}$ only take values in $\mathcal{C}$. Then, the difference between $u$ and the numerical solution $\numSol^n$ from  \eqref{def:stochSolSC} satisfies
\begin{align*} 
  \|\stochSol&(t_n,\cdot,\cdot)-\numSol^n(\cdot,\cdot)\|_{\leb{2}{\tilde{\Xi} }{\Leb^2(\Lambda)}}^2 \\
   \leq& ~ 2\|\reconst{sts}(t_n,\cdot,\cdot)- \numSol^n(\cdot,\cdot)\|_{\leb{2}{\tilde{\Xi} }{\Leb^2(\Lambda)}}^2 \\
   &+ 
 2 \int \limits_{\tilde{\Xi}} \Big[\Big(\lowerEta^{-1}(y) \Big( \stsResidual(t_n,y) + \upperEta(y) \stsResidual_0(y) \Big)\Big) 
 \\& \times \exp\Big(\int \limits_0^{t_n}  \frac{\upperEta(y) \fluxConstant(y) \|\partial_x \reconst{sts}(t,\cdot,y)\|_{\Leb^\infty(\Lambda)} + \upperEta^2(y)} {\lowerEta(y)}~\mathrm{d}t  \Big) \Big] ~w(y)\mathrm{d}y,
\end{align*}
for all $n=0,\ldots,\ntCells$ and for any $\tilde{\P}$-measurable set $\tilde{\Xi} \subseteq \Xi$.
Here
\begin{align} 
&\stsResidual(t_n,y):= \|\residual{sts}(\cdot,\cdot,y)\|_{\Leb^2((0,t_n)\times \Lambda)}^2, \\
&\stsResidual_0(y):= \|u^0(\cdot,y)-\reconst{sts}(0,\cdot,y)\|_{\Leb^2(\Lambda)}^2.
\end{align}
\end{theorem}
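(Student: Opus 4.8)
The plan is to reduce the claim to the deterministic relative entropy a posteriori estimate of \cite{GiesselmannMakridakisPryer15} applied path-wise in the stochastic variable, and then to integrate the resulting bound over the random space. First I would fix a realization $y\in\Xi$ for which $\stochSol(\cdot,\cdot,y)$ is an entropy admissible solution of the deterministic version of \eqref{eq:RIVP}; this holds $\Pas$ by \thmref{thm:randomEntropySol}. By \defref{def:STSResidual}, the space-time-stochastic reconstruction $\reconst{sts}(\cdot,\cdot,y)$ is Lipschitz in space and time and, by \eqref{eq:STSResidual}, solves the perturbed conservation law
\[
\deriv{t}\reconst{sts}(t,x,y) + \deriv{x}\Flux(\reconst{sts}(t,x,y),y) = \residual{sts}(t,x,y),
\]
so the computable residual $\residual{sts}(\cdot,\cdot,y)$ plays the role of a source term. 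Since $\reconst{sts}$ takes values only in $\mathcal{C}$, the Hessian bounds collected before the theorem, with constants $\lowerEta(y)$, $\upperEta(y)$ and $\fluxConstant(y)$, are available along this path.

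Next I would invoke Theorem 5.5 of \cite{GiesselmannMakridakisPryer15} for this deterministic, perturbed problem. The relative entropy $\eta(\stochSol(\cdot,y)\,|\,\reconst{sts}(\cdot,y))$ is comparable to the squared $\Leb^2$-distance, $\tfrac{\lowerEta(y)}{2}\,|\stochSol-\reconst{sts}|^2 \leq \eta(\stochSol\,|\,\reconst{sts}) \leq \tfrac{\upperEta(y)}{2}\,|\stochSol-\reconst{sts}|^2$, so controlling its time evolution controls $\|\stochSol-\reconst{sts}\|_{\Leb^2(\Lambda)}^2$. Differentiating $\int_\Lambda \eta(\stochSol\,|\,\reconst{sts})\,\mathrm{d}x$ in time, using the entropy inequality for $\stochSol$ together with the equation for $\reconst{sts}$, one obtains a Gr\"onwall-type differential inequality whose integration yields the path-wise estimate
\[
\|\stochSol(t_n,\cdot,y) - \reconst{sts}(t_n,\cdot,y)\|_{\Leb^2(\Lambda)}^2 \leq \lowerEta^{-1}(y)\Big(\stsResidual(t_n,y) + \upperEta(y)\stsResidual_0(y)\Big)\exp\Big(\int_0^{t_n}\frac{\upperEta(y)\fluxConstant(y)\|\partial_x\reconst{sts}(t,\cdot,y)\|_{\Leb^\infty(\Lambda)} + \upperEta^2(y)}{\lowerEta(y)}\,\mathrm{d}t\Big),
\]
where the exponential is the Gr\"onwall factor and $\stsResidual_0(y)$ is the contribution of the initial reconstruction error.

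I would then split the total error by the triangle inequality and the elementary bound $(a+b)^2\leq 2a^2+2b^2$,
\[
\|\stochSol(t_n,\cdot,y) - \numSol^n(\cdot,y)\|_{\Leb^2(\Lambda)}^2 \leq 2\|\stochSol(t_n,\cdot,y) - \reconst{sts}(t_n,\cdot,y)\|_{\Leb^2(\Lambda)}^2 + 2\|\reconst{sts}(t_n,\cdot,y) - \numSol^n(\cdot,y)\|_{\Leb^2(\Lambda)}^2,
\]
multiply by the weight $w(y)$, and integrate over the $\tilde{\P}$-measurable set $\tilde\Xi$. The first term on the right reproduces the reconstruction-error term of the claimed bound, while inserting the path-wise estimate into the second term produces exactly the stochastic integral, completing the derivation for every $n=0,\ldots,\ntCells$.

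The main obstacle is not the algebra of the splitting, which is routine, but the justification that Theorem 5.5 applies path-wise and that the resulting per-realization bound is a measurable function of $y$, so that the final integral over $\tilde\Xi$ is well defined. Here I would rely on \assref{ass:randomcompactSet} to guarantee that both $\stochSol$ and $\reconst{sts}$ remain in the compact convex set $\mathcal{C}$ for a.e.\ $(t,x,y)$ (so the Hessian bounds hold), and on \assref{ass:randomflux} together with the polynomial, hence continuous, dependence of $\reconst{sts}$ on $y$ to conclude that $\lowerEta(y)$, $\upperEta(y)$, $\fluxConstant(y)$ and $\|\partial_x\reconst{sts}(t,\cdot,y)\|_{\Leb^\infty(\Lambda)}$ are measurable in $y$. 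One should also confirm that the hypotheses of Theorem 5.5 used here are met by the RKDG reconstruction (Lipschitz regularity and $\reconst{sts}(\cdot,\cdot,y)$ attaining values in $\mathcal{C}$), which is precisely the standing assumption of the statement.
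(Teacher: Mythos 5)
Your proposal is correct and follows essentially the same route as the paper: the paper's proof simply invokes Theorem 5.5 of \cite{GiesselmannMakridakisPryer15} path-wise in $y$ (treating $\residual{sts}$ as the source term of the perturbed equation solved by $\reconst{sts}$) and integrates the resulting bound over $\tilde{\Xi}$. Your additional attention to the measurability of the per-realization bound and of the constants $\lowerEta(y)$, $\upperEta(y)$, $\fluxConstant(y)$ is a sound supplement to the paper's terse argument, not a deviation from it.
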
 

\begin{proof}
Thanks to the path-wise structure we can apply the 
deterministic setting from \cite[Theorem 5.5 ]{GiesselmannMakridakisPryer15} for almost
any  $y\in  \tilde{\Xi} \subseteq \Xi$. Integrating over $\tilde{\Xi}$ gives the desired
estimate directly. 
\end{proof}

In \thmref{thm:stochApostNumSol} the error between the numerical solution and the random entropy 
admissible solution is bounded
by  the error in the initial condition, the difference between the numerical solution and its reconstruction,
and the contribution of the space-time stochastic residual $\residual{sts}$ from \defref{def:STSResidual}, quantified by $\stsResidual$.
However, we still cannot distinguish between errors that arise from discretizing the random space and the physical space. 
We, thus, are going to describe a splitting of the space-time-stochastic residual $\residual{sts}$ into a spatial and a stochastic residual. 
\begin{lemma}[Splitting of the space-time-stochastic residual] \label{lemma:splitting}
The space-time-stochastic residual $\residual{sts}$ admits the decomposition
\begin{align} \label{eq:splittingSC}
\residual{sts} = \residual{det} + \residual{stoch}, 
\end{align}
with 
\begin{align}
\residual{det}& :=    \sum \limits_{\globalIdx\in \cK}\residual{st}(y_\globalIdx)l_\globalIdx \text{ and } \label{def:detResidual}
\\  \residual{stoch} &:=  \Big(\deriv{x} \Flux\Big( \sum \limits_{\globalIdx\in \cK} \reconst{st}(y_\globalIdx,\cdot) l_\globalIdx(\cdot)\Big) 
- 
\sum \limits_{\globalIdx\in \cK} \deriv{x} \Flux(\reconst{st}(y_\globalIdx),y_\globalIdx) l_\globalIdx   (\cdot)\Big) . \label{def:stochResidual}
\end{align}
$\residual{det}$ and $\residual{stoch}$
are called the deterministic and stochastic residual.
\end{lemma}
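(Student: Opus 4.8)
The plan is to establish the decomposition \eqref{eq:splittingSC} as a direct algebraic identity by adding and subtracting a single collocated flux term. First I would start from \defref{def:STSResidual}, namely $\residual{sts} = \deriv{t}\reconst{sts} + \deriv{x}\Flux(\reconst{sts},y)$, and insert the expansion $\reconst{sts} = \sum_{\globalIdx\in\cK}\reconst{st}(y_\globalIdx)\,l_\globalIdx$ from \defref{def:STSReconst}. Because each Lagrange polynomial $l_\globalIdx$ depends only on $y$ and not on $t$, the time derivative passes through the finite sum, giving $\deriv{t}\reconst{sts} = \sum_{\globalIdx\in\cK}\deriv{t}\reconst{st}(y_\globalIdx)\,l_\globalIdx$.

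The key step is to add and subtract the collocated spatial flux contribution $\sum_{\globalIdx\in\cK}\deriv{x}\Flux(\reconst{st}(y_\globalIdx),y_\globalIdx)\,l_\globalIdx$. Grouping this subtracted term with the time-derivative term and using linearity of the sum produces
\[
\sum_{\globalIdx\in\cK}\Big(\deriv{t}\reconst{st}(y_\globalIdx) + \deriv{x}\Flux(\reconst{st}(y_\globalIdx),y_\globalIdx)\Big)\,l_\globalIdx = \sum_{\globalIdx\in\cK}\residual{st}(y_\globalIdx)\,l_\globalIdx = \residual{det},
\]
where the middle equality is precisely the definition \eqref{eq:STResidualColl} of the space-time residual and the last is \eqref{def:detResidual}. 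The two remaining terms are $\deriv{x}\Flux(\reconst{sts},y) - \sum_{\globalIdx\in\cK}\deriv{x}\Flux(\reconst{st}(y_\globalIdx),y_\globalIdx)\,l_\globalIdx$; substituting $\reconst{sts} = \sum_{\globalIdx\in\cK}\reconst{st}(y_\globalIdx)\,l_\globalIdx$ into the first flux argument reproduces exactly $\residual{stoch}$ as given in \eqref{def:stochResidual}, which completes the identity.

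There is no genuine obstacle here, as the statement is an elementary algebraic identity; the only points requiring care are that differentiation in $t$ commutes with the finite collocation sum because the Lagrange weights are $t$-independent, and that the two ways the random parameter enters the nonlinear flux must be tracked correctly --- once through the state argument $\reconst{sts}(t,x,y)$ and once through the explicit dependence of the random flux $\Flux(\cdot,y)$ (the first term of $\residual{stoch}$ is understood with this same realization $y$, matching the flux in $\residual{sts}$). Conceptually the decomposition isolates $\residual{det}$, the genuine collocation-wise space-time discretization error, from $\residual{stoch}$, which is the commutator between applying the spatial flux derivative and interpolating in the random variable, i.e.\ the error incurred by replacing $\Flux(\reconst{sts},y)$ by its Lagrange interpolant in $y$.
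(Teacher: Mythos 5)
Your proposal is correct and follows essentially the same route as the paper: both proofs insert the Lagrange expansion $\reconst{sts}=\sum_{\globalIdx\in\cK}\reconst{st}(y_\globalIdx)\,l_\globalIdx$ into the definition of $\residual{sts}$, add and subtract the collocated flux term $\sum_{\globalIdx\in\cK}\deriv{x}\Flux(\reconst{st}(y_\globalIdx),y_\globalIdx)\,l_\globalIdx$, and identify the regrouped terms with $\residual{det}$ via the collocation-point identity \eqref{eq:STResidualColl} and with $\residual{stoch}$ via \eqref{def:stochResidual}. Your explicit remarks on the $t$-independence of the Lagrange weights and on tracking the two roles of $y$ in the flux are sound and consistent with the paper's (more terse) computation.
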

\begin{proof}
For every collocation point $y_\globalIdx,$ $\globalIdx\in\cK$, we compute in \appAref{appendix:A} the space-time reconstruction 
$\reconst{st}(\cdot,\cdot,y_\globalIdx)$ which fulfills
\begin{align} \label{eq:determinsticResidual} 
\residual{st}(y_\globalIdx)= \deriv{t} \reconst{st}(y_\globalIdx) + \deriv{x} \Flux(\reconst{st}(y_\globalIdx),y_\globalIdx ).
\end{align}
Moreover, we know from \eqref{eq:STSResidual}  that the space-time-stochastic reconstruction
 $\reconst{sts}=\sum \limits_{\globalIdx\in \cK} \reconst{st}(t,x,y_\globalIdx)l_\globalIdx(y)$
satisfies the relation
\begin{align} \label{eq:spacestochResidual}
\residual{sts}= \deriv{t} \reconst{sts} + \deriv{x}\Flux(\reconst{sts},\cdot)
= \deriv{t}\Big(\sum \limits_{\globalIdx\in \cK} \reconst{st}(y_\globalIdx)l_\globalIdx \Big) + \deriv{x} \Flux \Big(\sum \limits_{\globalIdx\in \cK} \reconst{st}(y_\globalIdx) l_\globalIdx, \cdot\Big).
\end{align}
Multiplying \eqref{eq:determinsticResidual} by $l_\globalIdx$ and summing over $\globalIdx \in \cK$ yields
\begin{align}\label{eq:sumdeterministicResidual}
\sum \limits_{\globalIdx\in \cK}\residual{st}(y_\globalIdx)l_\globalIdx
= \sum \limits_{\globalIdx\in \cK} \deriv{t} \reconst{st}(y_\globalIdx) l_\globalIdx+ \sum \limits_{\globalIdx\in \cK} \deriv{x} \Flux(\reconst{st}(y_\globalIdx),y_\globalIdx) l_\globalIdx. 
\end{align}
Inserting (\ref{eq:sumdeterministicResidual}) into (\ref{eq:spacestochResidual}) yields
\begin{align*}
\residual{sts}=& ~\deriv{t}\Big(\sum \limits_{\globalIdx\in \cK} \reconst{st}(y_\globalIdx) l_\globalIdx \Big) +
 \deriv{x} \Flux\Big( \sum \limits_{\globalIdx\in \cK} \reconst{st}(y_\globalIdx) l_\globalIdx, \cdot\Big) \notag
\\ 
&+  \sum \limits_{\globalIdx\in \cK}\residual{st}(y_\globalIdx)l_\globalIdx -
 \Big(\sum \limits_{\globalIdx\in \cK} \deriv{t} \reconst{st}(y_\globalIdx) l_\globalIdx+ \sum \limits_{\globalIdx\in \cK} \deriv{x} \Flux(\reconst{st}(y_\globalIdx),y_\globalIdx) l_\globalIdx  \Big) \notag
\\ 
=& \sum \limits_{\globalIdx\in \cK}\residual{st}(y_\globalIdx)l_\globalIdx+ 
\Big(\deriv{x} \Flux\Big( \sum \limits_{\globalIdx\in \cK} \reconst{st}(y_\globalIdx) l_\globalIdx,\cdot\Big)
- 
\sum \limits_{\globalIdx\in \cK} \deriv{x} \Flux(\reconst{st}(y_\globalIdx),y_\globalIdx) l_\globalIdx   \Big)  \\
=&  \residual{det} + \residual{stoch}.  
\end{align*}
\end{proof}

To simplify \thmref{thm:stochApostNumSol} let us assume that the eigenvalues 
of the Hessian $H_u\eta(u,y)$  are bounded from above
and below by positive numbers, for any $u \in \C$ uniformly in $\Xi$. 
We let $\upperEta:= \operatorname{ess}\sup \limits_{y \in \Xi} \upperEta(y) < \infty$,
$\lowerEta:= \operatorname{ess}\inf \limits_{y \in \Xi} \upperEta(y) >0$
and $\fluxConstant:= \operatorname{ess}\sup \limits_{y \in \Xi} \fluxConstant(y) < \infty$. 
In our numerical examples, where we consider the compressible Euler equations, 
the dependence of the flux function $\Flux$ and $\eta$ on $y$ is explicitly known 
and we can compute the constants $\lowerEta, \upperEta, \fluxConstant$
numerically.

The following corollary is a simple consequence of the splitting in \lemref{lemma:splitting}.
\begin{corollary}[A posteriori error bound with error splitting and simplified bounds] \label{cor:stochApostEasy}
Let $u$ be a random entropy admissible solution of \emph{\eqref{eq:RIVP}}. Then, the difference between $u$ and the numerical solution $
\numSol^n$ from \eqref{def:stochSolSC} satisfies
\begin{align*}
\|\stochSol(t_n,\cdot,\cdot)-\numSol^n(\cdot,\cdot)\|_{\leb{2}{\tilde{\Xi}}{\Leb^2(\Lambda)}}^2   
\leq & 
 ~2\|\reconst{sts}(t_n,\cdot,\cdot)- \numSol^n(\cdot,\cdot)\|_{\leb{2}{\tilde{\Xi} }{\Leb^2(\Lambda)}}^2 
 \\ & + 2 \lowerEta^{-1} \Big( 2\detResidual(t_n)+2\stochResidual(t_n) + \upperEta \stsResidual_0  \Big) 
 \\ & \times   \exp\Big( \lowerEta^{-1} \int \limits_0^{t_n}   \Big(\upperEta \fluxConstant \|\partial_x \reconst{sts}(t,\cdot,y)\|_{\leb{\infty}{\tilde{\Xi}}{\Leb^\infty(\Lambda)}} 
 + \upperEta^2\Big)  ~\mathrm{d}t \Big) 
\end{align*}
for $n=0,\ldots, \ntCells$ and for all $\tilde{\P}$-measurable sets $\tilde{\Xi} \subseteq \Xi$.
Here,
 \begin{align} 
 \detResidual(t_n)     &:= \|\residual{det}\|_{\leb{2}{\tilde{\Xi}}{\Leb^2((0,t_n)\times \Lambda}}^2, \label{def:spatialIndicator}\\
 \stochResidual(t_n)  &:= \|\residual{stoch}\|_{\leb{2}{\tilde{\Xi}}{\Leb^2((0,t_n)\times \Lambda}}^2,  \label{def:stochasticIndicator} \\
 \stsResidual_0       & := \|u^0(\cdot,\cdot)-\reconst{sts}(0,\cdot,\cdot)\|_{\leb{2}{\tilde{\Xi}}{
 	 \Lambda}}^2.
 \end{align}
\end{corollary}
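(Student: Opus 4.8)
The plan is to start from the estimate of \thmref{thm:stochApostNumSol} and successively replace every $y$-dependent quantity by its uniform bound, and then invoke the splitting of \lemref{lemma:splitting}. First I would treat the exponential factor appearing inside the integral over $\tilde{\Xi}$. By the definition of the uniform constants we have $\tilde{\P}$-a.s. that $\lowerEta(y)\geq\lowerEta$, $\upperEta(y)\leq\upperEta$ and $\fluxConstant(y)\leq\fluxConstant$, while pointwise in $t$ the quantity $\|\partial_x\reconst{sts}(t,\cdot,y)\|_{\Leb^\infty(\Lambda)}$ is dominated by $\|\partial_x\reconst{sts}(t,\cdot,\cdot)\|_{\leb{\infty}{\tilde{\Xi}}{\Leb^\infty(\Lambda)}}$. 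Hence the integrand of the exponential is bounded above, for a.e.~$y$, by the $y$-independent expression $\lowerEta^{-1}\big(\upperEta\fluxConstant\|\partial_x\reconst{sts}(t,\cdot,\cdot)\|_{\leb{\infty}{\tilde{\Xi}}{\Leb^\infty(\Lambda)}}+\upperEta^2\big)$. Since $\exp$ is monotone, this bounds the whole exponential factor by its uniform counterpart; and because the bracketed prefactor together with $w(y)$ is nonnegative, I may replace the exponential by this uniform value and pull the resulting ($y$-independent) factor out of the $\int_{\tilde{\Xi}}$ integral while preserving the inequality.

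Next I would bound the remaining prefactor inside the integral. Using $\lowerEta^{-1}(y)\leq\lowerEta^{-1}$ and $\upperEta(y)\leq\upperEta$, the remaining integral is dominated by
\[
2\lowerEta^{-1}\int_{\tilde{\Xi}}\big(\stsResidual(t_n,y)+\upperEta\,\stsResidual_0(y)\big)\,w(y)\,\mathrm{d}y .
\]
Here I would recognize the two weighted integrals as squared weighted $L^2$-norms directly from the definitions of $\stsResidual$, $\stsResidual_0$ and of $\|\cdot\|_{\leb{2}{\tilde{\Xi}}{\cdot}}$, namely $\int_{\tilde{\Xi}}\stsResidual(t_n,y)w(y)\,\mathrm{d}y=\|\residual{sts}\|_{\leb{2}{\tilde{\Xi}}{\Leb^2((0,t_n)\times\Lambda)}}^2$ and $\int_{\tilde{\Xi}}\stsResidual_0(y)w(y)\,\mathrm{d}y=\stsResidual_0$.

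The final step invokes \lemref{lemma:splitting}, which provides the pointwise decomposition $\residual{sts}=\residual{det}+\residual{stoch}$. Applying the elementary inequality $\|a+b\|^2\leq 2\|a\|^2+2\|b\|^2$ in $\leb{2}{\tilde{\Xi}}{\Leb^2((0,t_n)\times\Lambda)}$ yields $\|\residual{sts}\|_{\leb{2}{\tilde{\Xi}}{\Leb^2((0,t_n)\times\Lambda)}}^2\leq 2\detResidual(t_n)+2\stochResidual(t_n)$, which produces exactly the factor of two and the two separate residual indicators that appear in the claimed bound. Combining the three estimates reproduces the statement. I do not anticipate a genuine difficulty; the only step requiring mild care is the extraction of the exponential from the integral, where I must use the nonnegativity of the integrand—guaranteed by $\lowerEta^{-1}(y),\upperEta(y)>0$, $\stsResidual,\stsResidual_0\geq 0$ and $w\geq 0$—so that bounding the exponential by its uniform value indeed preserves the inequality.
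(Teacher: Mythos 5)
Your proposal is correct and follows exactly the route the paper intends: the paper states the corollary as ``a simple consequence of the splitting in \lemref{lemma:splitting}'', and your argument fills in precisely those steps --- uniformly bounding $\lowerEta(y)$, $\upperEta(y)$, $\fluxConstant(y)$ and the exponential factor in \thmref{thm:stochApostNumSol} (correctly noting the nonnegativity needed to pull the exponential out of the integral), identifying the weighted integrals of $\stsResidual(t_n,y)$ and $\stsResidual_0(y)$ with the corresponding $\leb{2}{\tilde{\Xi}}{\cdot}$-norms, and then applying $\|a+b\|^2\leq 2\|a\|^2+2\|b\|^2$ to the decomposition $\residual{sts}=\residual{det}+\residual{stoch}$. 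No gaps; this is a complete write-up of the paper's implicit proof.
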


\begin{remark} 
\begin{enumerate}[label=(\roman*)]
\item The residual $\residual{det}$ in \eqref{def:detResidual} interpolates spatio-temporal residuals
and contains information about the discretization error in physical space, 
i.e. the space-time resolution of \eqref{eq:DIVP} using the RKDG method.
In contrast to  $\residual{det}$,
the stochastic residual $\residual{stoch}$ in \eqref{def:stochResidual} 
indicates the quality of the interpolation in stochastic space.
\item In order for the upcoming space-stochastic adaptive algorithm based on
$\detResidual$, $\stochResidual$ to be efficient, we need $\detResidual$ to depend solely on the
space-time discretization and to be
independent of the stochastic discretization. Similarly, we need $\stochResidual$ to
decay when the stochastic resolution is increased but 
to be independent of the space-time discretization.
\\ In \rmref{rm:uniformity} we prove that $\detResidual$ is indeed 
unaffected by stochastic refinement.
Our numerical experiment in \secref{subsubsec:smooth1D}
also shows that $\stochResidual$ is unaffected by spatial refinement.
\item The scaling properties of $\detResidual$ , resp. $\residual{st}(y_\globalIdx)$, were studied in 
\cite{GiesselmannDedner16}. 
Currently we are not able to prove any of the scaling properties of $\stochResidual$ w.r.t. 
to $q$ and the number of Multi-Elements. However, our numerical experiments show 
that $\stochResidual$ scales as desired, i.e. $\stochResidual$ shows the same qualitative behavior as
the stochastic interpolation error of the exact solution.
\item As described in   \cite{GiesselmannDedner16} and \cite{GiesselmannMeyerRohde18},
 $\residual{det}$ scales with $\frac{1}{h}$ in the vicinity of shocks
and contact discontinuities, i.e., it blows up under spatial mesh refinement in these areas, 
although the numerical solution converges towards the exact solution.
Hence, we only have reliable a posteriori error control for smooth solutions of 
\eqref{eq:RIVP}. However, as 
$\residual{det}$  precisely captures the positions of rarefaction waves, contact discontinuities 
and shocks we use $\residual{det}$ and $\residual{stoch}$, resp. $\detResidual$ 
and $\stochResidual$, as local indicators for our adaptive mesh refinement algorithms
described in \secref{subsec:adaptivealgo}. 
\end{enumerate}
\label{rm:residuals}
\end{remark}
\begin{remark}[Uniformity of the deterministic residual in $\Xi$] \label{rm:uniformity}
As noted above, the collocation points $y_\globalIdx$ are chosen to be the zeros of the 
corresponding orthogonal polynomial depending on the distribution of $\xi$.
The deterministic residual $\residual{det}$ from \eqref{def:detResidual} consists of  Lagrange polynomials associated with the corresponding
collocation points, thus Gaussian quadrature in $\Xi$ yields
\begin{align*}
\detResidual(T) = \| \residual{det} \|_{{\leb{2}{\Xi}{\Leb^2((0,T)\times \Lambda}}} &= \sum \limits_{\globalIdx\in \cK} \|\residual{st}(y_\globalIdx)\|_{L^2((0,T)\times \Lambda)} w_\globalIdx 
\leq \max  \limits_{\globalIdx \in \cK} \|\residual{st}(y_\globalIdx)\|_{L^2((0,T)\times \Lambda)}.
\end{align*}
Hence, $\detResidual$ inherits the convergence order of $\residual{st}(y_\globalIdx)$.
\end{remark}

\subsection{Adaptive Algorithms} \label{subsec:adaptivealgo}
The splitting of the space-time-stochastic residual into a deterministic and a stochastic residual helps
us in developing adaptive numerical schemes where we use the residuals as local error indicators for spatial and stochastic mesh refinement. 
We describe the deterministic spatially adaptive algorithm, which we use to solve \eqref{eq:DIVP} for every collocation point 
$y_{\globalIdx}$, $\globalIdx \in \cK$.
We slightly abuse the notation from \eqref{def:spatialIndicator} 
and write for every physical cell $I \in \Tn$,  $\detResidual_{\globalIdx}(t_{n},t_{n+1},I):= \| \residual{st}(y_{\globalIdx})\|_{\Leb^2((t_n,t_{n+1})\times I)}$, 
which is the cell-wise indicator for the spatial refinement in $\Lambda$.

The local physical mesh refinement is achieved by uniformly dividing one cell into two
new children cells or merging two cells into one parent cell. To mark elements for refinement
we compute the deterministic residual  
$\detResidual_{\globalIdx}(t_{n},t_{n+1},I)$ 
on every cell $I \in \Tn$
and based on the residual we mark a fixed fraction of the cells for refinement.
To coarsen the mesh, we can
only merge cells that have the same parent element and both siblings are marked for coarsening.
For coarsening we also choose a fixed fraction of all elements
according to the local residual  $\detResidual_{\globalIdx}(t_{n},t_{n+1},I)$, cf. \cite{HartmannHouston2002}.
Additionally, each cell is augmented with a variable denoting its
current mesh-level which is initially zero. We fix a maximum mesh-level $L\in \N$,
to restrict the fineness of the adaptive mesh. The algorithm reads as follows:

 \begin{algorithm}[H] 
 \caption{Deterministic $h$-adaptive algorithm}
 \textbf{Input:} final time $T$, max mesh-level $L$, initial mesh $\T_{0}$ 
 \begin{algorithmic}[1]   
\State Compute $\numSol^{n+1}$ on the current mesh $\Tn$ using \algoref{algo:TVBMRungeKutta} (see \appAref{appendix:A})
\State Compute $\detResidual_{\globalIdx}(t_{n},t_{n+1},I)$ for $I \in \Tn$ and mark a fixed fraction of the elements
for refinement and coarsening
\begin{algsubstates}
        \State Refinement: If the cell's mesh-level is $L$ do nothing. Else divide it uniformly into two new cells
                       and increase the two new cells' mesh-level  by one
        \State Coarsening: If the cell's mesh-level is zero do nothing. Else check if its sibling is marked for coarsening.
               If yes merge the two cells into one and decrease its mesh-level by one 
      \end{algsubstates}
\State Project $\numSol^{n+1}$ onto the new mesh $\T_{n+1}$ using the $L^2$-projection
\State If $t_{n+1}<T$ go to step 1
\end{algorithmic}
\label{algo:hAdaptive}
\end{algorithm}
\begin{remark}
After every projection step in line three of \algoref{algo:hAdaptive} we apply
the TVBM slope limiter  $\Lambda \Pi_h$ from \appAref{appendix:A}.
\end{remark}
In the numerical experiments in \secref{sec:numerics} we observed that setting the refinement and coarsening fractions to 1\% and 20\% provided the best error reduction.
The finest mesh level will be $L=3$. 
Next, we describe the stochastic adaptive algorithm for the ME-SC method from \secref{subsec:ME} using
the stochastic residual $\stochResidual$ as local indicator for stochastic refinement.

 \begin{algorithm}[H] 
 \caption{Stochastic $\MEElements$-Adaptive Algorithm}
 \textbf{Input:} initial number of Multi-Elements $M_{\Xi}$, max no. of Multi-Elements $\MEElements$, $q+1$ number of collocation points in each stochastic dimension 
 \begin{algorithmic}[1]   
\State For every Multi-Element $D_{\MEIndex}$ compute $(q+1)^N$ numerical samples  using \algoref{algo:hAdaptive}
\State Compute $\stochResidual(T)$ on every Multi-Element $D_{\MEIndex}$ and uniformly subdivide the
  Multi-Element with the biggest residual, set $M_{\Xi}:= M_{\Xi} +(2^N-1)$ 
\State If $M_{\Xi}<\MEElements$ compute $M$ samples on every new Multi-Element and go to 2
\end{algorithmic}
\label{algo:stochAdaptive}
\end{algorithm}

\section{Numerical Examples} \label{sec:numerics}
In this section we present various numerical examples concerning the scaling properties of the residuals and the performance of the adaptive algorithms.
In \secref{subsec:detSmooth} and \secref{subsec:SmoothStoch} we examine the scaling properties of $\detResidual$ and $\stochResidual$.
Sections \ref{subsec:SodShockdet}, \ref{subsec:DiscStoch} and \ref{subsec:spaceStochSod} assess the efficiency of
our proposed adaptive algorithms.

As numerical solver we use the RKDG Code Flexi \cite{Hindenlang2012}.  
The DG polynomial degrees
will always be one or two and for the time-stepping we use the low storage SSP RK-method of order 
three as in \cite{Ketcheson2008}. The time-reconstruction is  also of order three.
As numerical fluxes we choose either the Lax-Wendroff numerical flux
\begin{equation} \label{eq:LW}
  G(u,v):= F(w(u,v)),\quad w(u,v):= \frac{1}{2}\Big((u+v)+ \frac{\Delta t}{h}(F(v)-F(u))\Big),
\end{equation}
or the Lax-Friedrichs numerical flux
\begin{equation} \label{eq:LF}
  G(u,v) := \frac{1}{2}\Big(F(u)+F(v) \Big) + \lambda(v-u).
\end{equation}
In our example, the uncertainty is uniformly distributed. Therefore, we use the zeros of the
Gau\ss-Legendre polynomials as collocation points.
Computing $\detResidual, \stochResidual$ requires computing integrals, we approximate them via
Gau\ss-Legendre quadrature where we use seven points in time, ten points in physical space 
and ten points in random space, except for Example \ref{subsec:SmoothStoch},
where for the global interpolation the number of quadrature points in random space will be $2q$.

In the following experiments we consider as instance of \eqref{eq:RIVP} the one-dimensional compressible Euler equations for the flow of an ideal gas, which
are given by
\begin{equation} \label{eq:euler}
\begin{alignedat}{2}
\hspace*{2cm}
\dt \rho &+ \dx m &&= 0 ,\\
\dt m &+ \dx \left(\frac{m^2}{\rho} + p\right)  &&=0,\\
\dt E &+ \dx \left( (E + p) \,\frac{m}{\rho}\right) &&  = 0,\hspace*{2cm}
\end{alignedat}
\end{equation}
where $\rho$ describes the mass density, $m$ the momentum and $E$ the energy of the gas. 
The constitutive law for pressure $p$ reads
$$p = (\gamma-1)\left(E - \frac{1}{2}\frac{m^2}{\rho}\right),$$
with the adiabatic constant $\gamma=1.4$ if not specified otherwise. 
In the following figures we refer to the quantity $\|m(T,\cdot,\cdot)-m_h^{\ntCells}(\cdot,\cdot)\|_{\leb{2}{\Xi}{\Leb^2(\Lambda)}}$ at final computational time $T$ as numerical error,
 unless otherwise stated.
We also plot the residuals  $\detResidual(T)$ and $\stochResidual(T)$ as in \eqref{def:spatialIndicator} and \eqref{def:stochasticIndicator} from the momentum equation. 
\begin{remark}
  Due to the structure  of the flux Jacobian for the Euler equations \eqref{eq:euler},
  \begin{align*} \D F(u)=
    \begin{pmatrix}
     0 & 1 & 0 \\
     -0.5(\gamma-3)\frac{m^2}{\rho^2} & (3-\gamma)\frac{m}{\rho} & \gamma-1 \\
     -\gamma \frac{Em}{\rho}+ (\gamma-1)\frac{m^3}{\rho^3} & \gamma \frac{E}{\rho}- \frac{3}{2}(\gamma-1)\frac{m^2}{\rho^2} & \gamma \frac{m}{\rho}
    \end{pmatrix},
  \end{align*}
  the first component of the stochastic residual $\residual{stoch}$ from \eqref{def:stochResidual} vanishes when considering the Euler equations  without source term. 
  We therefore use
  the residuals for the momentum and the energy balance as indicators for our space-stochastic mesh refinements.
\end{remark}
\subsection{A Deterministic Problem with Smooth Solution} \label{subsec:detSmooth}
In this numerical example, we study the scaling properties of the deterministic residual
$\detResidual$ from \eqref{def:spatialIndicator} for a uniform spatial
mesh refinement. To this end,
we construct a smooth exact solution by introducing 
an additional source term into the Euler equations. 
The exact solution reads as follows
\begin{align} \label{eq:detbenchmark}
	\begin{pmatrix}
	\rho(t,x,y) \\
	 m(t,x,y)  \\
	E(t,x,y)  
	\end{pmatrix}=
	\begin{pmatrix}
	2+ 0.1\cos(4\pi(x-t)) \\[0.1cm]
	\Big(2+ 0.1\cos(4\pi(x-t)) \Big)\Big(1+0.1\sin(4\pi(x-t))\Big) \\
	\Big(2+ 0.1\cos(4\pi(x-t)) \Big)^2
	\end{pmatrix}.
\end{align}
The numerical solution is computed on the domain $\Lambda=[0,1]_{\mathrm{per}}$ up to $T=0.5$ and we use the Lax-Wendroff numerical flux \eqref{eq:LW}.


In \tabref{table:detSmooth} we present the numerical error and the residual $\detResidual$ from \eqref{def:spatialIndicator}
for the smooth solution \eqref{eq:detbenchmark} for DG polynomial degrees one and two. We can see that the error and the residual 
converge with the correct order of convergence, which is $p+1$, where $p$ is the DG polynomial degree. 

\begin{table}[htb!]
\centering
\pgfplotstabletypeset[
    col sep=comma,
    string type,
    every head row/.style={%
        before row={\hline
            \multicolumn{5}{|c|}{$p=1$}  \\
            \hline
        },
        after row=\hline
    },
    every last row/.style={after row=\hline},
    columns/n/.style={column name=$\nCells$, column type={|c}},
    columns/l2err/.style={column name=error, column type={|c}},
    columns/eocErr/.style={column name=eoc, column type={|c|}},
    columns/residual/.style={column name=$\detResidual(T)$, column type={|c}},
    columns/eocResidual/.style={column name=eoc, column type={|c|}},
    ]{p1ErrorSmooth.csv}

\pgfplotstabletypeset[
    col sep=comma,
    string type,
    every head row/.style={%
        before row={\hline
            \multicolumn{5}{|c|}{$p=2$}  \\
            \hline
        },
        after row=\hline
    },
    every last row/.style={after row=\hline},
    columns/n/.style={column name=$\nCells$, column type={|c}},
    columns/l2err/.style={column name=error, column type={|c}},
    columns/eocErr/.style={column name=eoc, column type={|c|}},
    columns/residual/.style={column name=$\detResidual(T)$, column type={|c}},
    columns/eocResidual/.style={column name=eoc, column type={|c|}},
    ]{p2ErrorSmooth.csv}
    \caption{$L^2$-error, residual and experimental order of convergence (eoc). Example \ref{subsec:detSmooth}.}
    \label{table:detSmooth} 
\end{table}
\subsection{Deterministic Adaptivity:  Sod Shock Tube Problem} \label{subsec:SodShockdet}
In this numerical experiment we apply the adaptive spatial mesh refinement from \algoref{algo:hAdaptive} to the Sod shock tube problem. 
The Riemann data for this problem is given by
\begin{equation*} \label{eq:initialEulerSod}
\begin{alignedat}{1}
 \rho(t=0,x,y) &= \begin{cases}
1, \qquad &x < 0.5 \hspace*{2cm}\\
0.125, \qquad &x \geq 0.5,
\end{cases}\\
m(t=0,x,y) &= 0,\\
E(t=0,x,y) &= \begin{cases}
2.5, \qquad &x < 0.5,\\
0.25, \qquad &x\geq 0.5.
\end{cases}
\end{alignedat}
\end{equation*}

\begin{figure}[htb!]
  \centering
  \settikzlabel{fig:sod1}
   \settikzlabel{fig:sod2} 
  \includegraphics[width=2.5\figurewidth, height=1.5\figureheight]{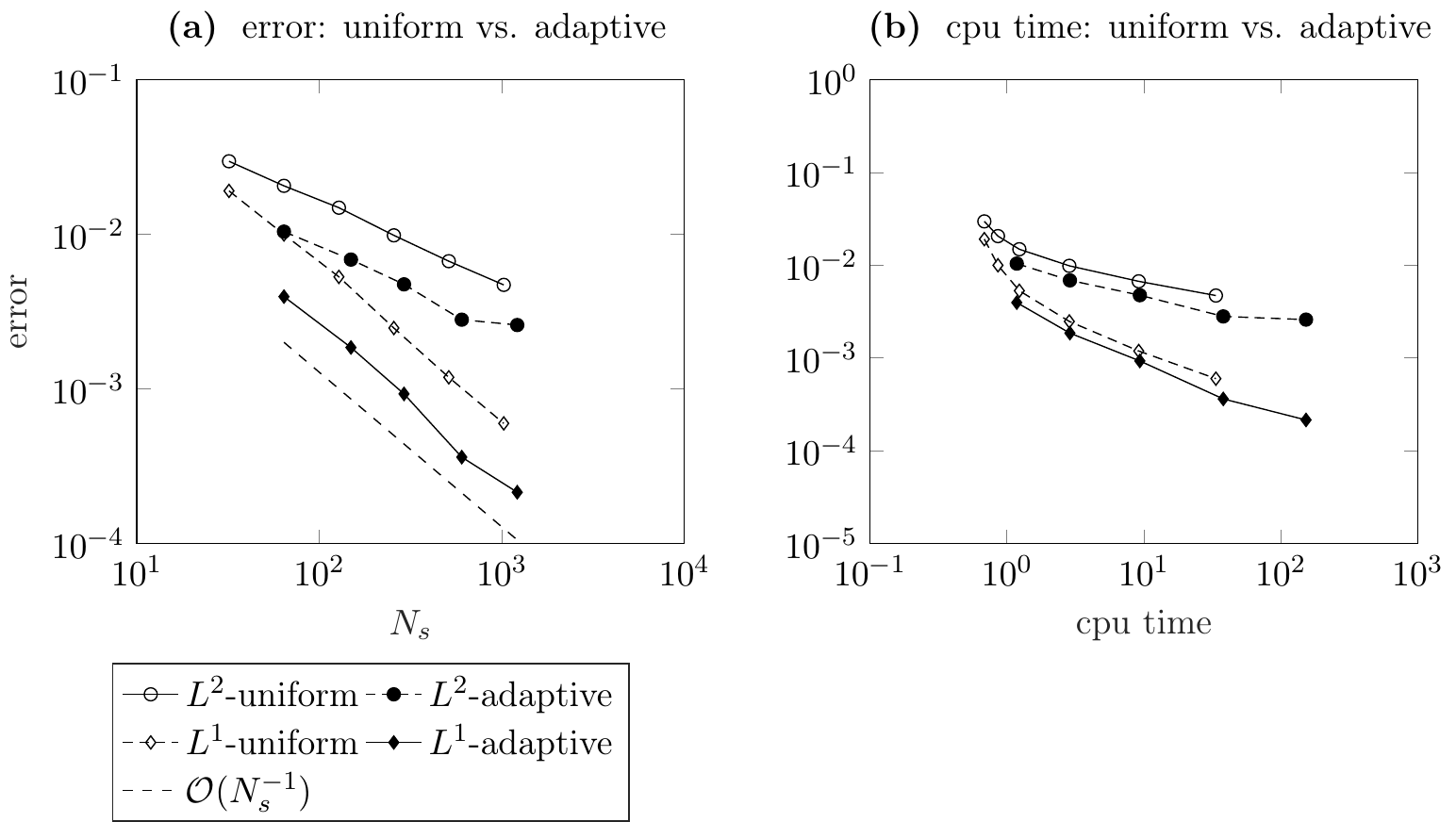}
   \caption{Error plot for the deterministic Sod shock tube problem. Example \ref{subsec:SodShockdet}}
  \label{fig:sod}
\end{figure}

The numerical solution is computed on the domain $\Lambda=[0,1]$ up to $T=0.2$ using the Lax-Friedrichs flux \eqref{eq:LF} and a DG polynomial degree 
of two. In this example we use exact boundary conditions. 
In \figref{fig:sod1} we compare the $L^1(\Lambda)$- and $L^2(\Lambda)$-error at time $T$ between the numerical solution and the exact solution obtained with 
an exact Riemann solver \cite{Backus2017}. 
We can see that for the same number of spatial cells $\nCells$, the numerical error obtained with the adaptive numerical algorithm 
is smaller than for the uniform mesh refinement. The adaptive algorithm is also computationally more efficient than
the uniform algorithm, which can be seen in the error vs. cpu time plot in \figref{fig:sod2}. 
\begin{remark}
As discussed in \rmref{rm:residuals} (iv), $\residual{det}$ scales with $\frac{1}{h}$ in the vicinity of shocks
and contact discontinuities, i.e., it blows up under spatial mesh refinement in these areas.
Thus, if we view the residual as an error indicator, it severely over-estimates the error so that it is to be called 
``inefficient'' in these areas, 
according to the nomenclature of e.g. \cite{Verfuerth2013}.
From the point of view of mesh adaptation however, refinement based on $\residual{det}$ 
leads to a reasonable refinement strategy that yields a considerable improvement
in error decay compared to uniform mesh refinement (cf. \figref{fig:sod}). In particular,
$\residual{det}$  precisely captures the positions of rarefaction waves, contact discontinuities 
and shocks.

Over-estimating the error at discontinuities 
leads to maximal refinement at discontinuities and some refinement strategies for
hyperbolic conservation laws suggest
a maximal refinement close to shocks \cite{PuppoSemplice2011}. 
\end{remark}
\subsection{A Stochastic Problem with Smooth Solution} \label{subsec:SmoothStoch}
In this section we focus on the scaling properties of the stochastic residual for a
one- and two-dimensional random space $\Xi$ and a random flux function.

\subsubsection{A One-Dimensional Random Space, $q$-Refinement} \label{subsubsec:smooth1D}
We modify the exact solution from 
\secref{subsec:detSmooth} in the following way, 
\begin{align}\label{eq:stochref}
	\begin{pmatrix}
	\rho(t,x,y) \\
	 m(t,x,y)  \\
	E(t,x,y)  
	\end{pmatrix}=
	\begin{pmatrix}
	2+ 0.1\cos(4\pi (x-yt)) \\[0.1cm]
	\Big(2+ 0.1\cos(4\pi (x-yt)) \Big)\Big(1+0.1\sin(4\pi (x-yt))\Big) \\
	\Big(2+ 0.1\cos(4\pi (x-yt)) \Big)^2
	\end{pmatrix}.
\end{align}
The numerical solution is computed on $\Lambda=[0,1]_{\mathrm{per}}$ up to $T=0.2$, the uncertainty $y$ stems from an uniform distribution, 
i.e. $\xi\sim \mathcal{U}(0,8)$. We consider two different spatial meshes consisting
of $\nCells = 32$ and 512 elements respectively, 
a DG polynomial degree of $p=2$ and we use the Lax-Wendroff numerical flux \eqref{eq:LW}.
In this  numerical example we globally approximate the function \eqref{eq:stochref} in $\Xi$,
i.e. we increase the polynomial degree $q$ and consider one ME.
\begin{figure}[htb!]
  \centering
  \settikzlabel{fig:stochSmooth_global}
  \includegraphics[width=2.5\figurewidth, height=1.5\figureheight]{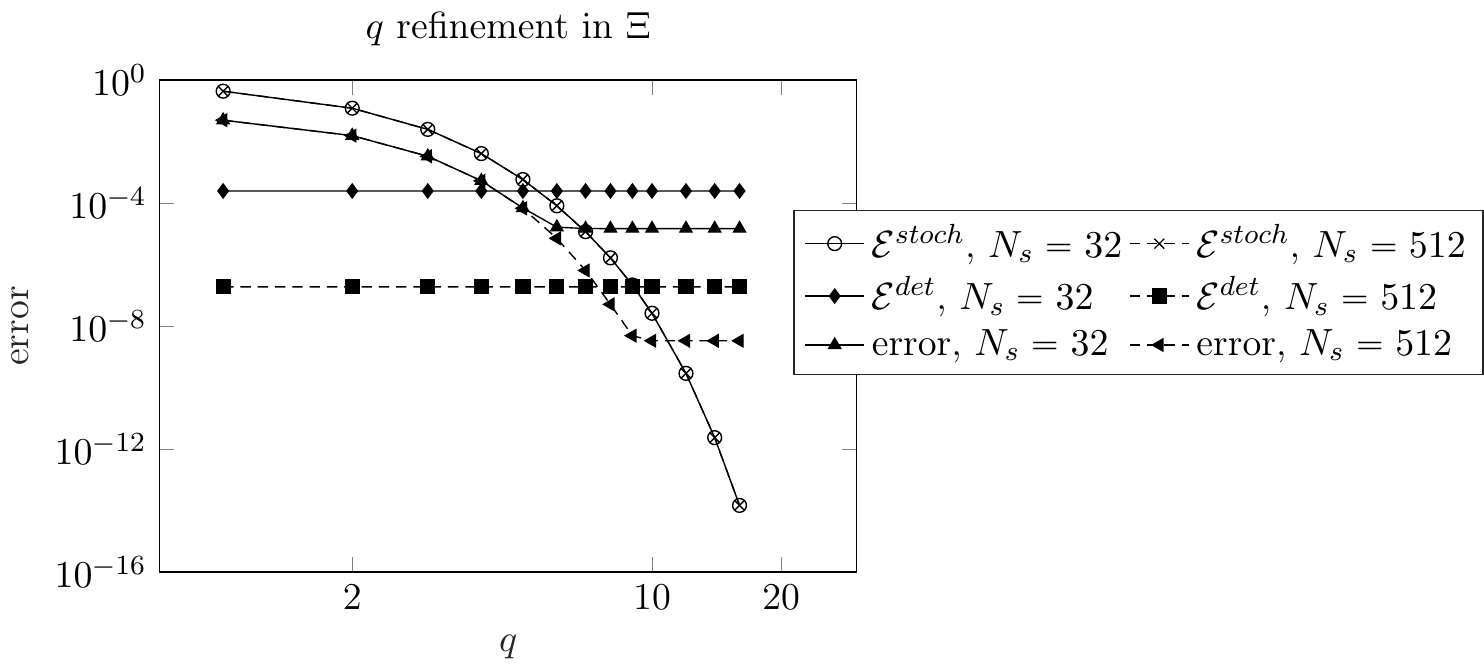}
  \caption{Error plot for stochastic smooth problem. Example \ref{subsubsec:smooth1D}}
  \label{fig:stochSmooth}
\end{figure}

\figref{fig:stochSmooth} shows the behavior of the error and the spatial, resp. stochastic residual, when we globally interpolate the smooth function
\eqref{eq:stochref}.
We see that the stochastic residual $\stochResidual$ exhibits spectral convergence. Also the numerical error exhibits 
spectral convergence until it starts to stagnate because of the spatial resolution error.
This is the correct behavior of the stochastic residual as we are globally increasing the polynomial degree in the random space and, therefore, expect 
spectral convergence with increasing polynomial degree.
We also observe that the exponential convergence of $\stochResidual$ is not altered by a finer or coarser space discretization.
Moreover, the deterministic residual $\detResidual$ is unaffected by the increasing resolution in the random space, 
which we expect from the residual's splitting into a space-time and a stochastic part.

\subsubsection{Mesh Refinement in $\Xi$ and Random Flux Function}  \label{subsubsec:smooth2D}
In this example we examine the scaling properties of $\stochResidual$ under
mesh refinements for a two-dimensional random space $\Xi\subset \R^2$.
 We consider the same smooth function as in \secref{subsubsec:smooth1D},
\begin{align}\label{eq:stochref2D}
	\begin{pmatrix}
	\rho(t,x,y_1) \\
	 m(t,x,y_1)  \\
	E(t,x,y_1)  
	\end{pmatrix}=
	\begin{pmatrix}
	2+ 0.1\cos(4\pi (x-y_1t)) \\[0.1cm]
	\Big(2+0.1\cos(4\pi (x-y_1t)) \Big)\Big(1+0.1\sin(4\pi (x-y_1t))\Big) \\
	\Big(2+ 0.1\cos(4\pi (x-y_1t)) \Big)^2
	\end{pmatrix}.
\end{align}
with $\xi_1\sim \mathcal{U}(0,8)$. Moreover, we consider a random adiabatic constant. We assume  that $\gamma=\xi_2 \sim \mathcal{U}(1.4,1.6)$ and
thus the flux function is also random. The randomness of the adiabatic-constant corresponds to considering a gas mixture of uncertain composition.
The numerical solution is computed on $\Lambda=[0,1]_{\mathrm{per}}$ up to $T=0.2$. We consider a fixed spatial mesh consisting of $\nCells = 32$ elements.
For the ME-SC method we perform a linear and a quadratic interpolation, i.e. $q\in \{1,2\}$.

\begin{figure}[htb!]
  \centering
  \settikzlabel{fig:stochSmooth_ME}
  \includegraphics[width=2.5\figurewidth, height=1.5\figureheight]{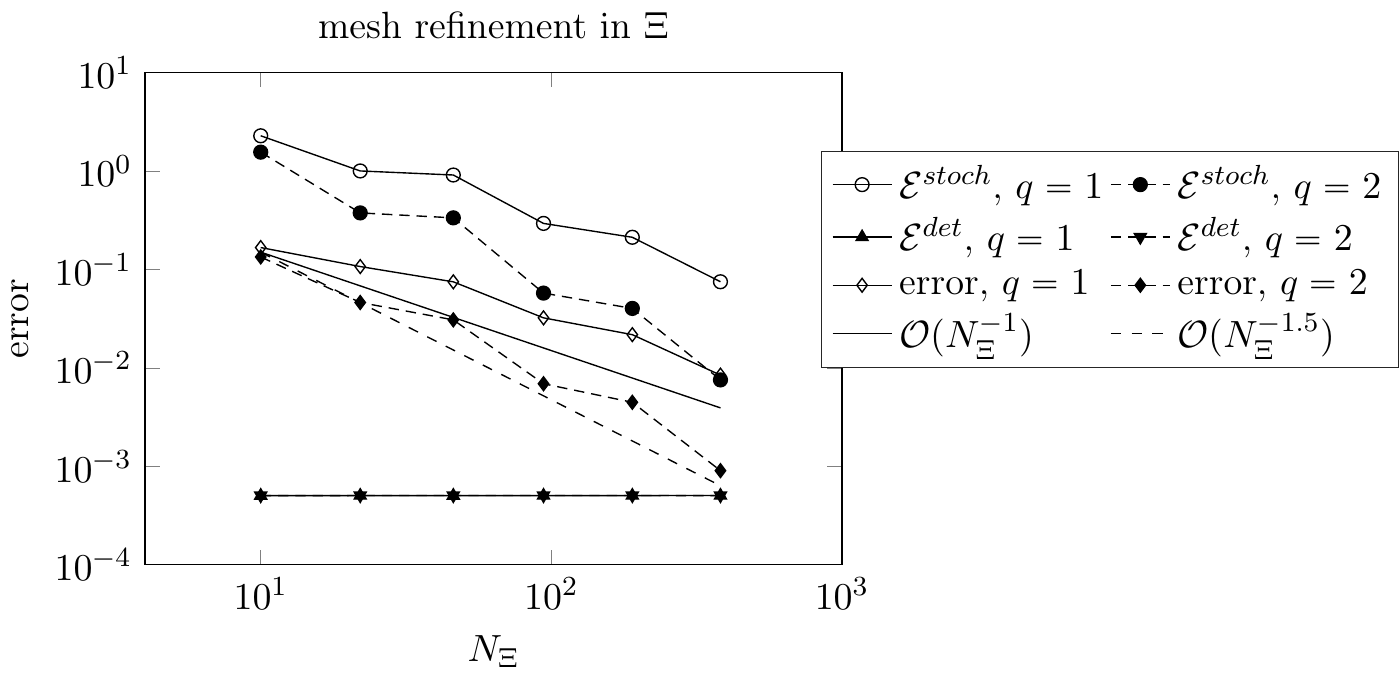}
  \caption{Error plot for stochastic smooth problem. Example \ref{subsubsec:smooth2D}}
  \label{fig:stochSmooth2D}
\end{figure}

\figref{fig:stochSmooth2D} illustrates the behavior of the stochastic residual $\stochResidual$, when we consider a local interpolation, i.e., when we
consider the ME method from \secref{subsec:ME}. We observe that for a local linear and quadratic interpolation, i.e. $q\in \{1,2\}$, the stochastic residual converges approximately with the 
expected rate of convergence, which is $(q+1)/2$, cf. \cite{WanKarniadakis2006a}. 
Like for the $q$-refinement in \secref{subsubsec:smooth1D}, the deterministic residual $\detResidual$ stays constant, when we increase the number of MEs.
 
\subsection{Stochastic Adaptivity: Stochastic Problem with Discontinuous Solution} \label{subsec:DiscStoch}
We apply the stochastic adaptive \algoref{algo:stochAdaptive} without spatial adaptivity
to a solution which has a discontinuity in the random variable 
and compare the results with uniform space-stochastic mesh refinements.
We therefore consider the following discontinuous function,
\begin{align*}
	\begin{pmatrix}
	\rho(t,x,y_1,y_2) \\
	 m(t,x,y_1,y_2)  \\
	E(t,x,y_1,y_2)  
	\end{pmatrix}=
	\begin{pmatrix}
	1+ A(y_1,y_2)\cos(4\pi (x-y_1t)) \\[0.1cm]
	\Big(1+ A(y_1,y_2)\cos(4\pi (x-y_1t)) \Big)\Big(1+0.1\sin(4\pi (x-y_1t))\Big) \\
	\Big(1+ A(y_1,y_2)\cos(4\pi (x-y_1t)) \Big)^2
	\end{pmatrix},
\end{align*}
where $$A(y_1,y_2)
=\begin{cases} 0.1, \text{ if } y_1^2+y^2\leq 0.5^2 \\
0.2, \text{ else } .
\end{cases}$$ 
is a discontinuous amplitude. For the spatial domain $\Lambda=[0,1]_{\mathrm{per}}$ we use $\nCells = 32$  elements and a DG polynomial degree of two.
The solution is computed up to $T=0.2$ using the Lax-Wendroff numerical flux \eqref{eq:LW}
and for the uncertainty we assume that $\xi_1, \xi_2\sim \U(0,1)$. For the ME-SC method we consider a linear interpolant, i.e. $q=1$. 

\begin{figure}[htb!]
  \centering
  \settikzlabel{fig:stochAdaptive_Error}
  \settikzlabel{fig:stochAdaptive_wall time}
  \includegraphics[width=2.5\figurewidth, height=1.5\figureheight]{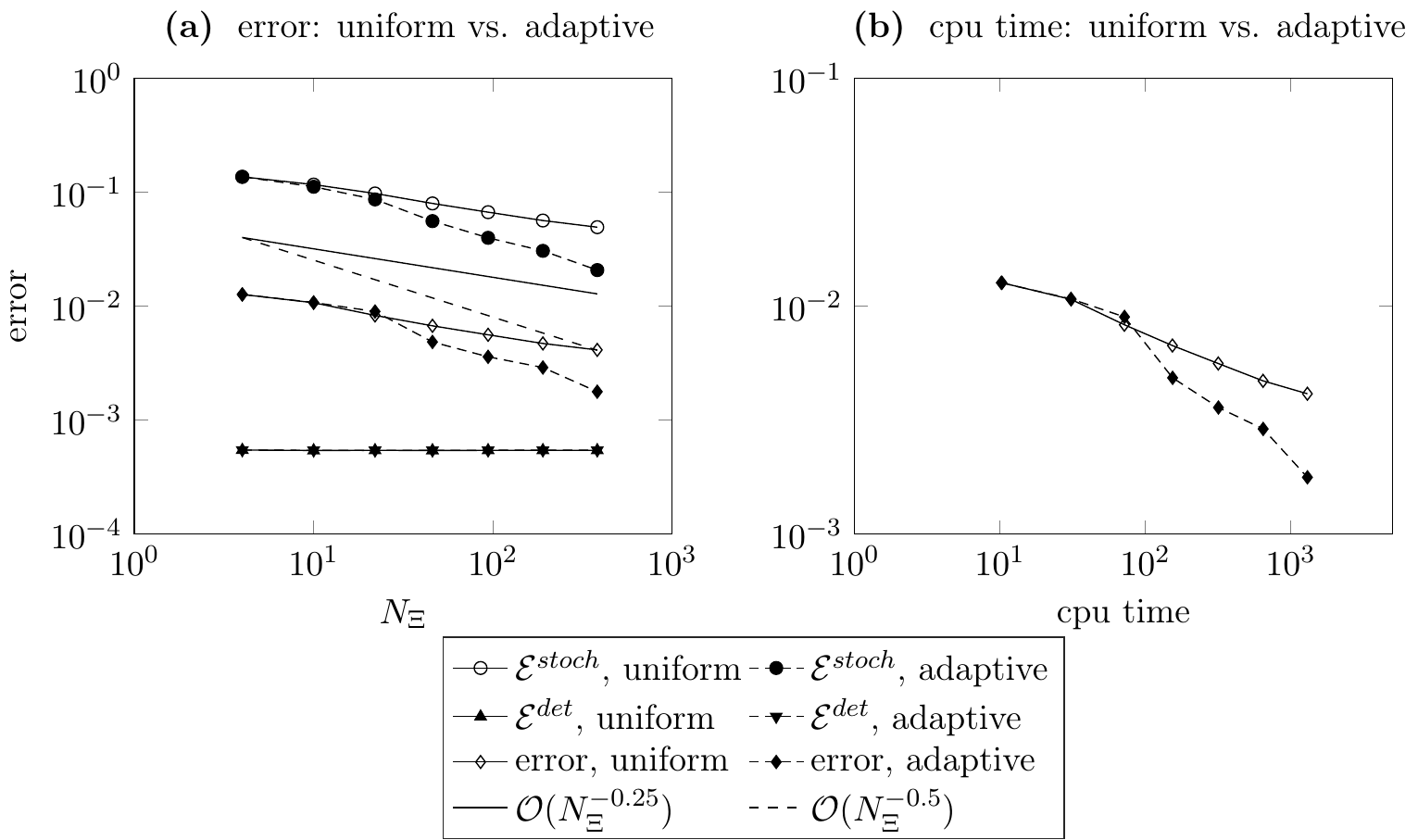}
  \caption{Error plot for discontinuous stochastic problem. Example \ref{subsec:DiscStoch}}
  \label{fig:stochAdaptive}
\end{figure}

In \figref{fig:stochAdaptive_Error} we plot the error and the spatial resp. stochastic residual versus the number of MEs 
and in \figref{fig:stochAdaptive_wall time} we show the error of the uniform and adaptive method versus cpu time. 
In \figref{fig:stochAdaptive_Error}  we can observe that for the uniform stochastic refinement, both the error and the
stochastic residual $\stochResidual$ converge with a rate of approximately $1/4$. This is in accordance with what we expect
when interpolating a two-dimensional discontinuous function.
For the adaptive refinement the error and the residual exhibit a rate of convergence
of approximately $1/2$. The advantage of the stochastic adaptive algorithm is also 
reflected in \figref{fig:stochAdaptive_wall time}, where we reach an error reduction in significantly less time compared to uniform refinement.

\subsection{Space-Stochastic Adaptivity: An Uncertain Riemann Problem} \label{subsec:spaceStochSod}
Finally, we assess the efficiency of the space-stochastic adaptive algorithm by considering a random Riemann Problem. The initial data for this
problem reads as follows
\begin{align*}
\begin{alignedat}{1}
 \rho(t=0,x,y) &=1 \\
m(t=0,x,y) &= 
\begin{cases}
y_1, & x \leq 0.5 \\
y_2, &x > 0.5 \\
\end{cases} \\
p(t=0,x,y) &= 1,
\end{alignedat}
\end{align*}
where $\xi_1,\xi_2 \sim \mathcal{U}(-1,1)$ and $\Lambda=[0,1]$.
We compare the space-stochastic adaptive \algoref{algo:stochAdaptive} with uniform refinement,
both in 
physical and random space. For this problem we use the Lax-Friedrichs numerical flux \eqref{eq:LF} and for the uniform spatial mesh we consider $\nCells=512$ spatial elements. As for the Sod Shock Tube problem in 
\secref{subsec:SodShockdet} we prescribe exact boundary conditions.
For the adaptive algorithm we always start on a spatial mesh consisting of $\nCells=256$ elements.
The DG polynomial degree is two and we consider a linear interpolation in the random space, i.e. $q=1$. The solution is computed up to $T=0.2$.
The error is measured in the expected value rather than the ${\leb{2}{\Xi}{\Leb^2(\Lambda)}}$-norm. Note that we do not have an exact solution at hand for this problem, but due  to Jensen's inequality,
\begin{align} \label{eq:numericalError}
\|\E(u(T,\cdot,\cdot))-\E(\numSol^{\ntCells}(\cdot,\cdot))\|_{L^2(\Lambda)}^2 & \leq \E\|u(T,\cdot,\cdot)- \numSol^{\ntCells}(\cdot,\cdot)\|_{L^2(\Lambda)}^2  \\
& = \|u(T,\cdot,\cdot)-\numSol^{\ntCells}(\cdot,\cdot)\|_{\leb{2}{\Xi}{\Leb^2(\Lambda)}}^2. \nonumber
\end{align}
The reference expectation $\E(u(T,\cdot,\cdot))$ is computed using a Monte-Carlo method with an exact Riemann solver
with 500000 samples.
\begin{figure}[htb!]
  \centering
  \settikzlabel{fig:sodStochdaptive_Error}
  \settikzlabel{fig:sodStochdaptive_wall time}
  \includegraphics[width=2.5\figurewidth, height=1.5\figureheight]{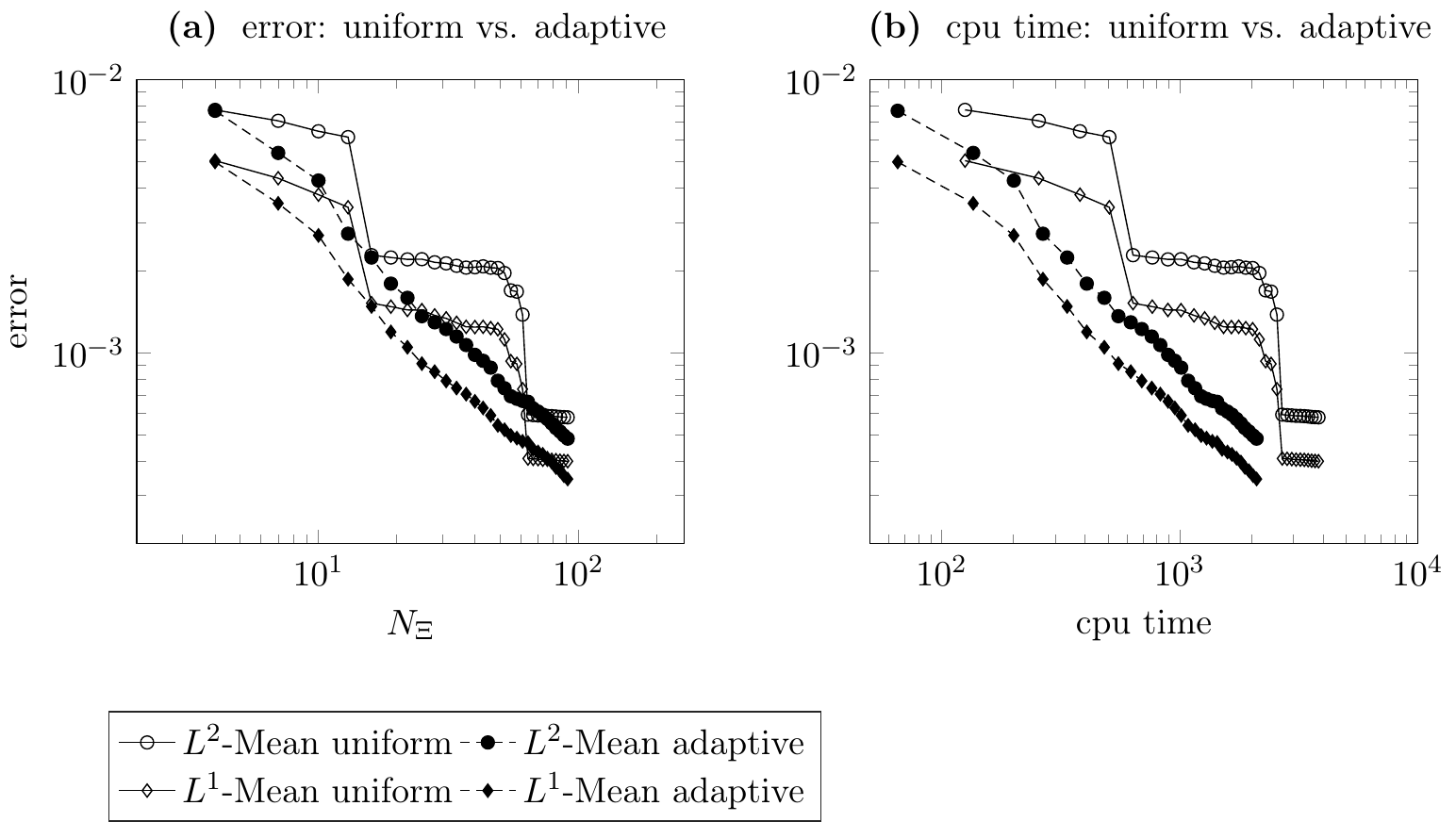}
  \caption{Error plot uncertain Riemann problem. Example \ref{subsec:spaceStochSod}}
  \label{fig:RiemannAdaptive}
\end{figure}
In \figref{fig:sodStochdaptive_Error} we show the numerical error as in \eqref{eq:numericalError} and we also consider
the error $\|\E(u(T,\cdot,\cdot))-\E(\numSol^{\ntCells}(\cdot,\cdot))\|_{L^1(\Lambda)}$ for an increasing number of MEs,
i.e. for increasing $\MEElements$. We can see that the adaptive algorithm decreases the error considerably faster than
the uniform refinement. 
This is also depicted in the cpu time vs. error plot (\figref{fig:sodStochdaptive_wall time}), 
where we can see that the adaptive algorithm
reaches an absolute error in significantly less computational time than the uniform algorithm. 
This demonstrates, in particular,
the efficiency of our proposed method.

\bibliographystyle{siam}
\bibliography{mybib.bib}
\appendix

\section{RKDG Method and Space-Time Reconstructions} \label{appendix:A}
In the following we describe the space-time discretization of \eqref{eq:DIVP} that we use
and the space-time reconstruction of the numerical solution. 
\subsection{The RKDG Method}
For the space-time discretization of \eqref{eq:DIVP} we use a Runge--Kutta DG method (RKDG). 
We recall the DG spatial discretization as for example in \cite{cockburn1998}. 
For the ease of presentation we neglect the dependence of the flux $\Flux$, the spatial mesh and the DG spaces on the collocation points $\{y_\globalIdx\}_{\globalIdx \in \cK}$.

Let $\T:= \{ I_k\}_{k=0}^{{\nCells-1}}$, $I_k:=(x_k,x_{k+1})$ be a quasi-uniform triangulation of $\Lambda=[0,1]_{\mathrm{per}}$. We set $h_k= (x_{k+1}-x_k)$, $h_{\max}= \max \limits_k h_k$, $h_{\min}= \min \limits_k h_k$ for the spatial mesh and identify $x_0=x_{\nCells}$ to account
for periodic boundary conditions.
Further let $0=t_0<t_1<\ldots< t_{\ntCells}=T$ be a  temporal decomposition of $[0,T]$ and define $\Delta t_n :=( t_{n+1}-t_n)$, $\Delta t =\max \limits_n ~\Delta t_n$. With each time-interval $(t_n,t_{n+1}]$ we associate a (possibly different) partition $\T_n$ and associated  DG space 
\begin{align*}
\DGn{p}:= \{v:\Lambda \to \R^m~|~ v\mid_{I} \in \P_p(I,\R^m),~ \text{ for all } I\in \Tn \}.
\end{align*} 
With  $\mathcal{L}_{\DGn{p}}$ we denote the $L^2$-projection mapping into the DG space $\DGn{p}$.

Following \cite{DednerOhlberger2008} we call the function $\numSol$  a  generalized  semi-discrete DG
approximation of \eqref{eq:DIVP} if it satisfies for $\numSol^{-1}:=\mathcal{L}_{V_{p,0}^s} \stochSol^0$
the following equations.
For every $n=0,\ldots,\ntCells$, $\numSol^n \mid_{[t_n,t_{n+1}]} \in C^1((t_n,t_{n+1}); \DGn{p})\cap C^0([t_n,t_{n+1}]; \DGn{p})$,
\begin{equation}\tag{DG} 
\begin{aligned} \label{eq:semidiscrete}
\numSol^n(t^n)=&~\mathcal{L}_{\DGn{p}} \numSol^{n-1}(t^n), \\
\sum \limits_{i=0}^{\nCells-1} \int \limits_{x_i}^{x_{i+1}}  \partial_t  \numSol^n \cdot \psi_h~\mathrm{d}x  
=& \sum \limits_{i=0}^{\nCells-1} \int \limits_{x_i}^{x_{i+1}} L_h^n(\numSol^n) \cdot   \psi_h~\mathrm{d}x   \quad \forall \psi_h \in \DGn{p},
\end{aligned}
\end{equation}
where
$ L_h^n:\DGn{p} \to \DGn{p}$
is defined by
\begin{multline}
 \sum \limits_{i=0}^{\nCells-1} \int \limits_{x_i}^{x_{i+1}}  L_h^n(v) \cdot \psi_h~\mathrm{d}x
 =
 \sum \limits_{i=0}^{\nCells-1} \int \limits_{x_i}^{x_{i+1}} \Flux(v) \cdot \partial_x  \psi_h~\mathrm{d}x     \vspace*{8pt}
\\ -\sum \limits_{i=0}^{\nCells-1} \numFlux(v(x_i^-),v(x_i^+) )\cdot  \dbracei \psi_h \dbraceo_i, \qquad \forall v, \psi_h \in \DGn{p}.
\end{multline}
The numerical solution $\numSol$ is defined through $\numSol(0):= \numSol^{-1}$ and $\numSol \mid_{(t_n,t_{n+1}]}:=\numSol^n\mid_{(t_n,t_{n+1}]}$.

Here,  $\numFlux: \R^m\times \R^m \to \R^m$ denotes a numerical flux, 
the spatial traces are defined as $\psi (x^{\pm}):= \lim \limits_{h\searrow 0} \psi(x \pm h)$ and $\dbracei \psi_h \dbraceo_i:=(\psi_h(x_i^-)-\psi_h(x_i^+))$ are jumps.

The initial-value problem \eqref{eq:semidiscrete} can now be solved numerically by any single- or multi-step method. 
We focus on $K$-th order Runge-Kutta time-step methods as in \cite{CockburnShu2001,Ketcheson2008}.
 Furthermore, $\Lambda \Pi_h : \R^{m} \to \R^{m}$ is the TVBM minmod slope limiter from \cite{CockburnShu2001}. 
 Then, the complete $S$-stage time-marching algorithm for given $n$-th time-iterate $\numSol^n(t_n)\in \DGn{p} $ reads as follows:
\begin{algorithm}[H] 
\caption{TVBM Runge--Kutta Time-Step}
\begin{algorithmic}[1]
\State Set $\numSol^{(0)}$ = $\numSol^n(t_n)$.
\For{$j=1,\ldots, S$}
\State Compute: $ \numSol^{(j)} =  \Lambda \Pi_h\Big( \sum \limits_{l=0}^{j-1} \alpha_{jl} w_h^{jl}\Big), \quad w_h^{jl}=\numSol^{(l)} + \frac{\beta_{jl}}{\alpha_{jl}} \Delta t_n L_h^n(\numSol^{(l)}).$
\EndFor
\State Set $\numSol^{n}(t_{n+1}) = \numSol^{(S)}.$
\end{algorithmic}
\label{algo:TVBMRungeKutta}
\end{algorithm}
The parameters $\alpha_{jl}$ satisfy the conditions $\alpha_{jl}\geq 0$, $\sum \limits_{l=0}^{j-1} \alpha_{jl}=1$ , and if $\beta_{jl} \neq 0$, then $\alpha_{jl} \neq 0$ for all $j=1,\ldots, S$, $l=0,\ldots,j$.
\subsection{Space-Time Reconstruction}
Our analysis relies on reconstructing the numerical solution $\{\numSol^n\}_{n=0}^{\ntCells}$ to a 
Lipschitz continuous function in space and time. We structure the reconstruction process as follows:
\begin{enumerate}
  \item \textbf{Computation of a temporal reconstruction $\reconst{t}$:}
  
  We first compute the temporal reconstruction as proposed in \cite{GiesselmannDedner16}.

  Let $\{\numSol^0, \ldots, \numSol^{\ntCells} \}$ 
be a sequence of approximate solutions of \eqref{eq:semidiscrete} at points $\{t_n\}_{n=0}^{\ntCells}$ in time,
where we assume that all approximate solutions are interpolated onto a reference mesh $\T$, which is a common refinement of all meshes. With $V_p^s$ we denote the DG space associated with $\T$, 
hence $\numSol(t_{n}) \in V_p^s$ for all
$n=0,\ldots, \ntCells$.

For the reconstruction in time we define the spaces of piecewise polynomials in time of degree $r$ by
\begin{align*}
V_r^t((0,T);V_p^s):=\{w:[0,T]\to V_p^s~|~w\mid_{(t_n,t_{n+1})} \in \P_r((t_n,t_{n+1}),V_p^s) \}.
\end{align*}
Using Hermite interpolation on each time interval $[t_n,t_{n+1}]$, 
we construct the temporal reconstruction $\hat{\stochSol}^t \in V_r^t((0,T);V_p^s)$.
  \item \textbf{Computation of a space-time reconstruction $\reconst{st}$ using the time reconstruction $\reconst{t}$:}
  
  With the temporal reconstruction  $\reconst{t}$ at hand, we define the space-time reconstruction  $\reconst{st}$ of the DG-solutions of \eqref{eq:semidiscrete}. 
The analysis in \cite{GiesselmannDedner16} requires numerical fluxes $\numFlux$ which admit a special representation.
In particular, there needs to exist a  locally Lipschitz function $w: \U \times \U \to \U$, with the additional property $w(\stochSol,\stochSol)=\stochSol$,
such that  $\numFlux$ can either be expressed as
\begin{align}\label{assum:flux}
\numFlux(\stochSol,v)=\Flux(w(\stochSol,v)), \quad \forall \stochSol,v \in  \U.
\end{align}
or as 
\begin{align}\label{assum:flux2}
\numFlux(\stochSol,v)=\Flux(w(\stochSol,v))- \mu(\stochSol,v;h)h^{\nu}(v-\stochSol), \quad \forall \stochSol,v \in  \U,
\end{align}
where $\nu \in \N$ and for some matrix-valued function $\mu$, which has the property that for any compact $K\subset \U$
there exists a $\mu_K>0$, such that $|\mu(\stochSol,v;h)|\leq \mu_K (1+ \frac{|v-\stochSol|}{h})$, for $h$ small enough. 
\begin{remark}
For our numerical computations we consider the following numerical fluxes.
\begin{itemize}
\item  The
Lax-Wendroff flux: $\numFlux(\stochSol,v)=\Flux(w(\stochSol,v))$ with $w(\stochSol,v)= \frac{\stochSol +v}{2} - \frac{\Delta t}{2 h}(\Flux(\stochSol)-\Flux(v))$,  satisfies \eqref{assum:flux}.
\item The Lax-Friedrichs flux :  $G(u,v) = \frac{1}{2}\Big(F(u)+F(w) \Big) + \lambda(w-u)$ satisfies \eqref{assum:flux2}, with
$\nu = 0$, $w(\stochSol,v):=\frac{1}{2}(\stochSol+v)$ and $\mu(\stochSol,v;h):= \lambda I - \frac{\Flux(\stochSol)-2 \Flux(w(\stochSol,v)) + \Flux(v)}{2 |v- u|^2}\otimes(\stochSol-v)$.
\end{itemize}
\end{remark}

We define the spatial reconstruction which is applied to the temporal reconstruction $\hat{\stochSol}^t(t,\cdot)$ for each $t\in (0,T)$ using the function $w$ (cf. \cite{GiesselmannDedner16, GiesselmannMakridakisPryer15}). 
\begin{definition}[Space-time reconstruction]\label{def:stReconstruct}
Let $\hat{\stochSol}^t$ be the temporal reconstruction of a sequence $\{\stochSol_h^n\}_{n=0}^{\ntCells}$ of solutions of the fully discrete scheme of \emph{\eqref{eq:semidiscrete}} 
using a numerical flux satisfying \eqref{assum:flux} or \eqref{assum:flux2}. 
The space-time reconstruction $\reconst{st}(t,\cdot) \in V_{p+1}^s$ is defined as the solution of
\begin{align*}
\sum \limits_{i=0}^{\nCells-1} \int \limits_{x_i}^{x_{i+1}}(\reconst{st}(t,\cdot)- \reconst{t}(t,\cdot)) \cdot \psi ~\mathrm{d}x &=0 \quad \forall \psi \in V_{p-1}^s,
\\ 
\reconst{st}(t,x_k^{\pm})&= w(\reconst{t}(t,x_k^-),\reconst{t}(t,x_k^+)) \quad \forall~ k=0,\ldots,\nCells.
\end{align*}
\end{definition}
We have the following property of the space-time reconstruction.
\begin{lemma}[\hspace{-1pt}\cite{GiesselmannDedner16}, Lemma 24]
Let $\reconst{st}$ be the space-time reconstruction from Definition \ref{def:stReconstruct}. 
For each $t\in (0,T)$, the function $\reconst{st}(t,\cdot)$ is well defined. Moreover,
\begin{align*}
\reconst{st}\in W_\infty^1((0,T);V_{p+1}^s\cap C^0(\Lambda)).
\end{align*}
\end{lemma}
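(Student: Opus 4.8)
The plan is to treat the two assertions separately: first the cell-wise well-posedness of the defining linear system at a fixed time $t$, then the Lipschitz dependence on $t$. I would begin by observing that the conditions in \defref{def:stReconstruct} decouple across cells and across components, so it suffices to work on a single cell $I_i=(x_i,x_{i+1})$ with a scalar polynomial $v$ of degree $\leq p+1$. Such a $v$ carries $p+2$ degrees of freedom, while the orthogonality against $V_{p-1}^s$ contributes $p$ linear conditions and the two prescribed nodal traces contribute two more, so the defining conditions form a square linear system. To establish invertibility I would show the homogeneous problem admits only the trivial solution: if $v$ vanishes at both endpoints, then $v(x)=(x-x_i)(x-x_{i+1})\tilde v(x)$ with $\deg\tilde v\leq p-1$, and testing the (homogeneous) orthogonality relation with $\psi=\tilde v\in V_{p-1}^s$ yields $\int_{I_i}(x-x_i)(x-x_{i+1})\,\tilde v^2\,\mathrm{d}x=0$; since the weight has constant sign on the interior, this forces $\tilde v\equiv 0$, hence $v\equiv 0$. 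This gives well-definedness and $\reconst{st}(t,\cdot)\in V_{p+1}^s$. Continuity across nodes is then immediate, since at each node $x_k$ both one-sided traces of $\reconst{st}(t,\cdot)$ are prescribed to equal the same value $w(\reconst{t}(t,x_k^-),\reconst{t}(t,x_k^+))$, so $\reconst{st}(t,\cdot)\in C^0(\Lambda)$ (with the identification $x_0=x_{\nCells}$ in the periodic case).

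For the time regularity I would rewrite the defining conditions as a fixed linear system $A\,c(t)=b(t)$, where $c(t)$ is the coefficient vector of $\reconst{st}(t,\cdot)$ in a basis of $V_{p+1}^s$, $A$ is the $t$-independent matrix encoding the moment projections and nodal evaluations (invertible by the previous step), and $b(t)$ collects the data, namely the moments $\int_{I_i}\reconst{t}(t,\cdot)\cdot\psi\,\mathrm{d}x$ and the nodal values $w(\reconst{t}(t,x_k^-),\reconst{t}(t,x_k^+))$. The temporal reconstruction is piecewise polynomial and, by the Hermite construction, at least Lipschitz in time, i.e. $\reconst{t}\in W_\infty^1((0,T);V_p^s)$; consequently each moment entry of $b$, being a fixed bounded linear functional of $\reconst{t}(t,\cdot)$, is $W_\infty^1$ in time. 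For the nodal entries I would invoke that $w$ is locally Lipschitz together with the fact that the one-sided traces of $\reconst{t}$ remain in a fixed compact subset of $\U$, so that $t\mapsto w(\reconst{t}(t,x_k^-),\reconst{t}(t,x_k^+))$ is $W_\infty^1$ as a composition of a Lipschitz map with a $W_\infty^1$ map. Hence $b$ is $W_\infty^1$ in time and, applying the fixed bounded inverse $A^{-1}$, so is $c=A^{-1}b$; together with the spatial continuity this yields $\reconst{st}\in W_\infty^1((0,T);V_{p+1}^s\cap C^0(\Lambda))$.

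The main obstacle I anticipate is the nonlinear and only locally Lipschitz coupling through $w$ in the nodal conditions. To make the local Lipschitz estimate uniform in $t$, one must guarantee that the one-sided traces $\reconst{t}(t,x_k^\pm)$ stay inside a fixed compact set on which $w$ has a finite Lipschitz constant; this is precisely where I would use the standing assumption that the reconstructed states remain in the compact set $\C\subset\U$. A secondary technical point is the behavior at the time-slab interfaces $t_n$: on each slab $\reconst{t}$ is a polynomial in $t$ and the preceding argument is clean, so I would verify that the one-sided time limits of $b$ match at $t_n$ (which follows from the continuity, indeed the $C^1$ matching, built into the Hermite reconstruction) in order to conclude global, rather than merely piecewise, $W_\infty^1$ regularity.
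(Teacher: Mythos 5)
Your proof is correct and follows essentially the same route as the source: the paper itself states this lemma without proof, citing \cite{GiesselmannDedner16}, Lemma 24, and the argument there is precisely your cell-wise square linear system (with the sign-definite weight $(x-x_i)(x-x_{i+1})$ killing the homogeneous solution), trace-matching for continuity, and time regularity inherited from the piecewise-polynomial temporal reconstruction combined with the local Lipschitz continuity of $w$ on a compact subset of $\U$. Your two anticipated technical points (uniform Lipschitz constant for $w$ via compactness, and continuity of the data across the time nodes $t_n$, which together with piecewise smoothness gives global $W^1_\infty$) are exactly the right ones and are handled correctly.
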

\end{enumerate}
.

\end{document}